\newtheorem{theorem}{\textbf{Theorem}}[section]
\newtheorem{lemma}[theorem]{\textbf{Lemma}}
\newtheorem{proposition}[theorem]{\textbf{Proposition}}
\newtheorem{claim}[theorem]{\textbf{Claim}}
\newtheorem{remark}[theorem]{\textbf{Remark}}
\numberwithin{equation}{section}
\numberwithin{figure}{section}
\g@addto@macro\th@plain{\thm@headpunct{}}
\newcommand\bea{\begin{eqnarray}}
\newcommand\eea{\end{eqnarray}}
\newcommand\beaa{\begin{eqnarray*}}
\newcommand\eeaa{\end{eqnarray*}}
\title{Sufficient conditions for determining the sign of the wave speed in the Lotka-Volterra competition
system}
\date{}
\author{}
\begin{document}

%%%%%%%%%%%%%%%%%%%%%%%%%%%%%%%%%%%%%
\maketitle
\vspace{-30pt}
\begin{center}
{\large\bf
%Chang-Hong Wu
%\footnote{Department of Applied Mathematics, National Yang Ming Chiao Tung University, Hsinchu, Taiwan.

%e-mail: {\tt changhong@math.nctu.edu.tw}}
%\footnote{National Center for Theoretical Sciences, Taipei, Taiwan},
Dongyuan Xiao\footnote{Graduate School of Mathematical Science, The University of Tokyo, Tokyo, Japan.

e-mail: {\tt dongyuanx@hotmail.com}}
%Maolin Zhou\footnote{Chern Institute of Mathematics and LPMC, Nankai University, Tianjin, China.

%e-mail: {\tt zhouml123@nankai.edu.cn}}
}
\end{center}

%%%%%%%%%%%%%%%%%%%%%%%%%%%%%%%%%%%%%
\begin{abstract}
This paper mainly focuses on the sign of the wave speed in the Lotka-Volterra competition system of bistable type, also known as the strong-strong competition case. The traveling wave solution of the system is crucial for understanding the long-time behavior of solutions to the Cauchy problem. Specifically, the sign of the wave speed is key to predicting which species will prevail in the competition. In this paper, by studying a degenerate Lotka-Volterra competition system, we propose two sufficient conditions for determining the sign of the wave speed.
\\

\noindent{\underline{Key Words:} competition-diffusion system, traveling waves, long-time behavior.}\\

\noindent{\underline{AMS Subject Classifications:}  35K57 (Reaction-diffusion equations), 35B40 (Asymptotic behavior of solutions).}
\end{abstract}
%%%%%%%%%%%%%%%%%%%%%%%%%%%%%%%%%%%%%%
\section{Introduction}

The main target of this paper is the two-species Lotka-Volterra competition system, which is of significant biological relevance \cite{Murray1993}:
%The second part of this paper is concerned with the speed linear and nonlinear selection problem of
%the following two-species competition-diffusion system
\begin{equation}\label{system}
\left\{
\begin{aligned}
&u_t=u_{xx}+u(1-u-av), & t>0,\ x\in\mathbb{R},\\
&v_t=dv_{xx}+rv(1-v-bu), & t>0,\ x\in \mathbb{R}.
\end{aligned}
\right.
\end{equation}
%Note that, the variational approach in \cite{Lucia Muratov Novaga} can not be extended to the system due to the lack of a variational structure.
In this system, $u=u(t,x)$ and $v=v(t,x)$ represent the population densities of two competing species at the time $t$ and position $x$; $d$ and $r$ stand for the diffusion rate and intrinsic growth rate of $v$, respectively; $a$ and $b$ represent the competition coefficient of $v$ and $u$, respectively. Here, all parameters are assumed to be positive and satisfy the bistable structure, {\it i.e.},
$a$ and $b$ satisfy
\begin{itemize}
\item[{\bf(H1)}] $a>1$ and $b>1$.
\end{itemize}

Regarding the traveling wave solution of \eqref{system} under the assumption {\bf(H1)},
Kan-on  \cite{Kan-On} showed
that there exists a unique traveling wave speed $\hat c\in[-2\sqrt{dr},2]$
such that \eqref{system} admits a positive solution
$(u,v)(x,t)=(U,V)(x- ct)$ with $c=\hat c$
 satisfying
\begin{equation}\label{tw solution}
\left\{
\begin{aligned}
&U''+cU'+U(1-U-aV)=0,\\
&dV''+cV'+rV(1-V-bU)=0,\\
&(U,V)(-\infty)=(1,0),\ (U,V)(\infty)=(0,1),\\
&U'<0,\ V'>0.
\end{aligned}
\right.
\end{equation}

The sign of the wave speed $\hat c$ is crucial in determining which species is dominant and, consequently, the long-time behavior of the solution to \eqref{system}. Under the assumption that $\hat c > 0$, the propagation phenomenon was first proven by Carrère \cite{Carrere}. More recently, Peng, Wu, and Zhou \cite{Peng Wu Zhou} provided refined estimates for both the spreading speed and the solution's profile.

Although the sign of speed has been studied widely in the literature,
it is still not completely understood. So far, the sign of $\hat c$ can be determined only for some parameter regions. For example, Rodrigo and Mimura \cite{Rodrigo Mimura} constructed some exact solutions to determine the sign of $\hat c$; 
Guo and Lin \cite{Guo Lin} used some fundamental analysis to provide some explicit conditions that can determine the sign directly. In particular, their results conclude that
\begin{itemize}
    \item When $r = d$, then $\hat c>0$ if $b > a > 1$, $\hat c=0$ if $a = b > 1$,  and $\hat c<0$ if $a > b > 1$;
    \item When $r > d$, then $\hat c>0$ if $a > 1$ and $b\ge \frac{r^2a}{d^2}$;
\item When $r < d$, then $\hat c<0$ if $b > 1$ and $a\ge \frac{d^2b}{r^2}$.
 \end{itemize}
%In addition, they find that if $r, d > 0$ and $a > 1$ are fixed, $\hat c>0$ for all large $b$.
Later, Girardin and Nadin \cite{Girardin Nadin} considered both $a$ and $b$ to be sufficiently large and used the singular limit approach to study the relation between the sign of $\hat c$ and the diffusion speed $d$. Ma, Huang, and Ou \cite{Ma Huang Ou}, and Morita, Nakamura, and Ogiwara \cite{Morita_etal2023} applied the upper-lower solutions method to establish criteria for the sign of $\hat c$. Chang, Chen, and Wang \cite{ChangChenWang2023} proposed a mini-max characterization to derive the speed sign.
We also refer to Girardin \cite{Girardin} for a survey on this topic. 
%One may also see Girardin and Nadin \cite{Girardin Nadin}, Rodrigo and Mimura \cite{Rodrigo Mimura} and Ma, Huang and Ou \cite{Ma Huang Ou} for related discussion.

In this paper, we always fix $b>1$, $r,d>0$, while setting the competition rate $a>1$ as a continuously %changing
varying parameter. As a result, by the simple comparison argument,  we can view the traveling wave speed $\hat c(a)$ as a nonincreasing continuous function of $a$. Our main result follows from a straightforward observation.
Since $a$ indicates the competition strength of $v$ species, when $a$ is sufficiently large, $v$ naturally becomes the stronger species and wins the competition. Conversely,  when $a$ is sufficiently close to $1$, $u$ naturally becomes the stronger species and wins the competition.
\begin{theorem}\label{th:conditions}
Let $r,d>0$ and $b>1$ in \eqref{system} be fixed. Then there exist $a_1>a_2>1$ such that for all $a\ge a_1$ the traveling wave speed satisfies $\hat c<0$, and for all $a\in (1,a_2]$ the traveling wave speed satisfies $\hat c>0$.
\end{theorem}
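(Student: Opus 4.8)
The plan is to exploit the monotonicity of $\hat c(a)$ together with two limiting comparisons that pin down its sign at the extremes of the parameter range. Since the excerpt already records that $\hat c(a)$ is a nonincreasing continuous function of $a$, it suffices to produce one value $a_1$ at which $\hat c<0$ and one value $a_2$ at which $\hat c>0$; continuity and monotonicity then extend these sign conclusions to the intervals $[a_1,\infty)$ and $(1,a_2]$ respectively, after a brief argument that the strict signs persist (e.g. using that if $\hat c(a_\ast)=0$ then $\hat c(a)<0$ for $a>a_\ast$ by strict monotonicity, or by a separate comparison). So the real content is to locate two concrete values of $a$ with the desired sign.

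For the case $a$ large I would argue that $v$ overwhelms $u$. The cleanest route is to use one of the explicit criteria already quoted in the introduction: Guo and Lin's result states that $\hat c<0$ whenever $r<d$ and $a\ge d^2 b/r^2$, and symmetric statements handle the other diffusion regimes. However, since $r,d$ are fixed arbitrarily, I cannot rely on a single one of those inequalities; instead I would construct an explicit supersolution/subsolution pair (an upper-lower solution argument for the traveling-wave system) showing that for $a$ sufficiently large the profile must move so that $v$ invades, forcing $\hat c<0$. Concretely, one compares the $v$-equation against a Fisher-KPP type front: when $a$ is huge, $u$ is suppressed almost everywhere in the region where $v$ is nontrivial, so $v$ effectively satisfies $dv_{xx}+cv'+rv(1-v)\approx 0$ with minimal speed $2\sqrt{dr}>0$ of the \emph{reverse} front, which pushes $\hat c$ negative. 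The symmetric construction with $a$ near $1$ makes $u$ the clear winner: when $a\downarrow 1$ the $u$-species feels almost no competition penalty, so the $u$-front propagates rightward and $\hat c>0$.

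I would carry out the steps in this order. First, recall and invoke the monotone dependence $\hat c(a)$ nonincreasing and continuous, reducing the theorem to exhibiting the two sign values. Second, treat the large-$a$ regime: build a moving supersolution for $(U,V)$ that advances with negative speed, apply the comparison principle for the bistable traveling-wave system, and conclude $\hat c(a_1)<0$ for some explicit $a_1$. Third, treat the small-$a$ regime by the mirror-image construction to obtain $\hat c(a_2)>0$ for some $a_2\in(1,a_1)$; shrinking if necessary gives $a_2<a_1$. Fourth, upgrade the non-strict inequalities coming from comparison to the strict signs in the statement, either by a strong maximum principle argument or by noting that $\hat c(a)=0$ is attained on at most a single point by the strict monotonicity, so away from it the sign is strict.

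The main obstacle will be the large-$a$ and small-$a$ comparison constructions themselves: because the system is of strong competition (bistable) type rather than monostable, the comparison principle does not apply directly to $(u,v)$ but only after the standard change of variables $(u,\tilde v)=(u,1-v)$ that turns the competitive system into a cooperative one, and the supersolution must respect both the monotonicity constraints $U'<0,V'>0$ and the boundary conditions at $\pm\infty$. Designing a one-parameter family of sub/supersolutions whose speed is provably negative (resp.\ positive) while remaining valid solutions of the differential inequalities for \emph{all} sufficiently large (resp.\ small) $a$ is the delicate part; I expect the bulk of the work to be verifying the differential inequalities at the interfaces where the trial profiles switch between their asymptotic values. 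An attractive alternative that sidesteps these constructions is to pass to the degenerate limit advertised in the abstract: study the limiting system as $a\to\infty$ (and as $a\to1$), show the limiting wave speed is strictly negative (resp.\ positive), and then transfer the sign back to large but finite $a$ by continuity of $\hat c(a)$ — this is likely the route the author intends given the phrase ``by studying a degenerate Lotka-Volterra competition system.''
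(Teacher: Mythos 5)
Your reduction of the theorem to exhibiting two point values $a_1,a_2$ with $\hat c(a_1)<0$ and $\hat c(a_2)>0$ is correct (monotonicity of $\hat c$ then gives the sign on the whole intervals, and no separate ``upgrade to strict signs'' step is needed). Your large-$a$ half is also essentially the paper's argument, especially in the ``attractive alternative'' you mention at the end: the paper argues by contradiction, takes waves with nonnegative speeds along $a_n\to\infty$ normalized by $V_n(0)=1/2$, shows via an explicit exponential barrier that $U_n\to 0$ locally uniformly, and concludes that $V_n$ converges to a monotone standing front of $dV''+rV(1-V)=0$, which is impossible since KPP fronts exist only for speeds of modulus at least $2\sqrt{dr}$. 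This is exactly your ``$u$ is suppressed, so $v$ satisfies KPP'' intuition, implemented by compactness rather than by an explicit sub/super-solution family valid for all large $a$.

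The genuine gap is in your small-$a$ half, and it is not a technical detail: the ``mirror-image construction'' does not exist, because the two regimes are not symmetric. For every $a\ge 1$ the linearization of the $u$-equation about the unstable-looking state $(0,1)$ is $u_t=u_{xx}+(1-a)u$, whose growth rate $1-a$ is nonpositive; so as $a\downarrow 1$ the species $u$ does not ``feel almost no competition penalty'' at the leading edge --- the penalty exactly cancels the intrinsic growth at $a=1$ and beats it for $a>1$. There is therefore no KPP-type mechanism pushing the $u$-front, and any trial sub-solution for $u$ built on leading-edge growth violates the differential inequality precisely in the region where $v\approx 1$. This is why the existence of $a_2$ is the hard part of the paper. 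What the paper actually needs is: (i) Proposition \ref{prop: minimal tw speed c>0}, that \emph{every} traveling wave of the degenerate system $a=1$ has $c>0$, proved not by comparison at the edge but by the global inequalities $U+V<1$ (when $d\le 1$) and $bU+V>1$ (when $d>1$); and (ii) the no-jump result $c_+:=\lim_{a\to 1^+}\hat c(a)=c^*(1)$ (Proposition \ref{prop: c**=c*}, feeding into Theorem \ref{th: continuity}), proved by a delicate five-piece super-solution construction using the exact decay rates of Lemmas \ref{lm: behavior around + infty}--\ref{lem:AS-infty:b>1}. Point (ii) is essential: the easy comparison only gives $c_+\le c^*(1)$, so positivity of the degenerate-case speed alone is compatible with $\hat c(a)\le 0$ for all $a>1$ if the speed were to jump down at $a=1$. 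Indeed, Theorem \ref{th:speed selection} (the front near $a=1$ is nonlinearly selected, i.e.\ pushed) is precisely the statement that the speed here cannot be read off the leading edge, which is what your proposed construction attempts to do.
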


The existence of $a_1$ can be proved by taking the limit as $a \to \infty$ and applying the singular limit approach, whereas the existence of $a_2$ is more complex. To address this, we  study a degenerate Lotka-Volterra system 
\begin{equation*}%\label{a=1 system}
\left\{
\begin{aligned}
&u_t=u_{xx}+u(1-u-v), & t>0,\ x\in\mathbb{R},\\
&v_t=dv_{xx}+rv(1-v-bu), & t>0,\ x\in \mathbb{R},
\end{aligned}
\right.
\end{equation*}
which can also be viewed as a threshold between the monostable and bistable types of the Lotka-Volterra system.

\bigskip
Next, we briefly introduce the Lotka-Volterra system of monostable type, commonly referred to as the strong-weak competition case. We assume the following condition:
\begin{itemize}
\item[{\bf(H2)}] $0<a<1$ and $b>1$.
\end{itemize}
The corresponding ODE system of \eqref{system} admits a unique stable state $(u,v)=(1,0)$, indicating that $u$ is the strong species and $v$ is the weak species.  Unlike the bistable case, for the monostable case, there exists a minimal traveling wave speed $c^*\ge 2\sqrt{1-a}$ such that \eqref{system} admits traveling wave solution $(u,v)(t,x)=(U,V)(x-ct)$ satisfying \eqref{tw solution} if and only if $c\ge c^*$. 

One of the most significant open problems for the Lotka-Volterra competition system is understanding the mechanism that determines whether  $c^*=2\sqrt{1-a}$ or $c^*>2\sqrt{1-a}$.
The linear and nonlinear selection of $c^{*}$ can be defined as follows:
\begin{itemize}
\item
It is linearly selected if
$c^{*}=2\sqrt{1-a}$ since the linearization of  \eqref{system}
at the unstable state $(u,v)=(0,1)$ results in the linear speed $2\sqrt{1-a}$.
In this case, the spreading speed is determined only by the leading edge of the population distribution.
\item 
when $c^{*}>2\sqrt{1-a}$, we say that the minimal traveling wave speed $c^{*}$ is nonlinearly selected.
In this case, the spreading speed is determined not only by the behavior of the leading edge of the population distribution but by the entire wave.
\end{itemize}
We also refer to the work of Roques et al. \cite{Roques et al 2015} that introduced another definition of the pulled front and the pushed front for \eqref{system}.

%Another terminology comes from Stokes \cite{Stokes1976}. We may say that $c^*=2\sqrt{1-a}$ is linearly selected while $c^*>2\sqrt{1-a}$ is nonlinearly selected.

Under the assumption {\bf{(H2)}}, sufficient conditions for linear or nonlinear selection mechanism for \eqref{system} have been investigated
widely.  Okubo et al. \cite{Okubo etal} used a heuristic argument to conjecture that the minimal speed $c^{*}$ is linearly selected.
Later, Hosono \cite{Hosono 1998} suggested that $c^{*}$ can be nonlinearly selected in some parameter regimes.
The first explicit condition was proposed by Lewis, Li, and Weinberger \cite{Lewis Li Weinberger 1}, who stated  that linear selection holds when
\beaa%\label{LLW-cond}
0<d<2\quad {\rm and}\quad  r(ab-1)\leq (2-d)(1-a).
\eeaa
This sufficient condition for linear selection was later improved by Huang \cite{Huang2010}, who provided the following condition:
\begin{equation*}%\label{huang-condition}
\frac{(2-d)(1-a)+r}{rb}\ge \max\Big\{a,\frac{d-2}{2|d-1|}\Big\}.
\end{equation*}
Roques et al. \cite{Roques et al 2015} numerically reported that the region of the parameter for linear determinacy could still be further improved. 
For the minimal speed $c^{*}$ to be nonlinearly selected, Huang and Han \cite{HuangHan2011} constructed examples in which linear determinacy fails to hold.
Holzer and Scheel \cite{Holzer Scheel 2012} showed that, for fixed $a$, $b$, and $r$,
 the minimal speed $c^{*}$ becomes nonlinear selection as $d\to\infty$. 
For related discussions, we also refer to, e.g.,
\cite{Alhasanat Ou2019-1, Alhasanat Ou2019, Guo Liang, Hosono 1995, Hosono 2003} and the references cited therein.

\bigskip
Let $r,d>0$ and $b>1$ be fixed. Regardless of whether we consider the bistable case $a>1$, the monostable case $0<a<1$, or the degenerate case $a=1$, we can define the asymptotic speed of spread $c^{**}$ (in short, spreading speed) of the solution starting from the initial datum 
\begin{equation}\label{initial datum}
u_0(x)\ge 0\ \ \text{compactly supported continuous function,}\ v_0(x)>0\ \ \text{uniformly positive},
\end{equation}
in the sense as follows:
\begin{equation*}
\left\{
\begin{aligned}
&\lim_{t\to\infty}\sup_{|x|\ge ct}\Big(u(t,x)+|1-v(t,x)|\Big)=0\ \ \mbox{for all} \ \ c>c^{**};\\
&\lim_{t\to\infty}\sup_{|x|\le ct}\Big(|1-u(t,x)|+v(t,x)\Big)=0\ \ \mbox{for all} \ \ c<c^{**}.
\end{aligned}
\right.
\end{equation*}
To emphasize the dependence on $a$, we denote the spreading speed by $c^{**}(a)$. By proving Theorem \ref{th: continuity} below, we can not only conclude the existence of $a_2$ but also establish a new sufficient condition to determine the speed of nonlinear selection. 
\begin{theorem}\label{th: continuity}
Let $r,d>0$ and $b>1$ in \eqref{system} be fixed. The spreading speed $c^{**}(a)$ is continuous for all $a\in\mathbb{R}^+$. Moreover, $c^{**}(1)$ is strictly positive.
\end{theorem}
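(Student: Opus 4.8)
The plan is to prove Theorem \ref{th: continuity} in two parts: first establishing continuity of $c^{**}(a)$ on all of $\mathbb{R}^+$, and then showing the strict positivity $c^{**}(1)>0$ at the degenerate value. For the continuity, I would exploit monotonicity together with a sandwiching argument. Fixing $r,d>0$ and $b>1$, the comparison principle shows that for $a<a'$ the system with parameter $a'$ has a pointwise larger competition effect on $v$; consequently the spreading speed $c^{**}(a)$ is nonincreasing in $a$, as already noted in the discussion preceding Theorem \ref{th:conditions}. A monotone function can only have jump discontinuities, so it suffices to rule these out. To do this, I would take a sequence $a_n\to a_0$ and the corresponding solutions $(u_n,v_n)$ of \eqref{system} starting from a common initial datum of the form \eqref{initial datum}. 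Standard parabolic estimates give local $C^{2,1}$ bounds uniform in $n$, so that $(u_n,v_n)$ converges (along a subsequence, locally uniformly) to the solution $(u_0,v_0)$ with parameter $a_0$. The spreading speed being characterized by the interface between the state $(1,0)$ and $(0,1)$, I would argue that this interface location depends continuously on $a$ by combining the convergence of solutions with the squeeze $c^{**}(a_0^+)\le \liminf c^{**}(a_n)\le \limsup c^{**}(a_n)\le c^{**}(a_0^-)$ and showing both one-sided limits coincide with $c^{**}(a_0)$.

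For the strict positivity $c^{**}(1)>0$, the key is that at $a=1$ the first equation loses its bistable competition structure: the system becomes the degenerate (monostable-threshold) system displayed after Theorem \ref{th:conditions}, namely $u_t=u_{xx}+u(1-u-v)$ coupled with $v_t=dv_{xx}+rv(1-v-bu)$. Here the state $(0,1)$ is now only weakly (non-strictly) stable for the $u$-component, and the relevant comparison is with a scalar Fisher-KPP-type problem. The plan is to construct a moving subsolution that forces $u$ to invade: since $b>1$, near the state $(0,1)$ the linearized growth rate of $u$ is $1-v\approx 0$ from above, but the crucial observation is that $v$ cannot reach $1$ too fast, leaving room for $u$ to grow. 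Concretely, I would build a compactly supported subsolution that propagates rightward with a strictly positive speed, using the fact that along the wave the $u$-species experiences net positive growth in the bulk region where $v<1$.

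The main obstacle, and the step I expect to require the most care, is establishing the strict inequality $c^{**}(1)>0$ rather than merely $c^{**}(1)\ge 0$. The sign at the degenerate parameter is delicate precisely because the bistable structure fails, so the clean comparison arguments available for $a>1$ (e.g.\ the conditions of Guo and Lin recalled in the introduction) do not directly apply, and the linearization at $(0,1)$ is degenerate. My approach would be to leverage continuity from the strong-weak side: for $a$ slightly below $1$ the system is monostable with $u$ the strong species, giving a strictly positive spreading speed bounded below by the linear speed $2\sqrt{1-a}>0$; one would then need a lower bound on $c^{**}(a)$ that does not collapse to zero as $a\uparrow 1$. This requires a nonlinear (pushed-front) lower bound uniform near $a=1$, which I would obtain by an explicit subsolution or a variational/mini-max characterization in the spirit of \cite{ChangChenWang2023}. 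Once such a uniform-in-$a$ positive lower bound is in hand, continuity of $c^{**}$ established in the first part transfers strict positivity to $a=1$, completing the proof and simultaneously yielding the existence of $a_2$ in Theorem \ref{th:conditions} by continuity on the bistable side $a>1$.
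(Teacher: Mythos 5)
Your proposal correctly identifies the two issues (continuity at $a=1$ and strict positivity there), but both of your proposed arguments have genuine gaps, and in each case the missing step is precisely where the paper does its real work. For continuity: your plan to take $a_n\to a_0$, extract locally uniform limits of the solutions $(u_n,v_n)$ via parabolic estimates, and conclude that ``the interface location depends continuously on $a$'' cannot work as stated, because the spreading speed is an asymptotic-in-time quantity; locally uniform convergence on compact space-time sets gives no control whatsoever on the $t\to\infty$ behavior, and your squeeze $c^{**}(a_0^+)\le\liminf c^{**}(a_n)\le\limsup c^{**}(a_n)\le c^{**}(a_0^-)$ leaves open exactly the possibility of a jump, i.e.\ (in the paper's notation) $c_+<c_-$ where $c_\pm$ are the one-sided limits at $a=1$. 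Ruling this out is the entire content of the paper's Proposition \ref{prop: c**=c*}: assuming $c_->c_+$, one constructs an explicit piecewise super-solution $(\overline U,\underline V)=(U_0-R_u,V_0+R_v)$ moving at an intermediate speed $c_2\in(c_+,c_-)$, built from the bistable wave at $a=1+\delta_*$ with carefully matched auxiliary functions $(R_u,R_v)$ (Steps 1--5, using the exponential decay rates of Lemmas \ref{lm: behavior around + infty} and \ref{lem:AS-infty:b>1}), and this blocks the solution of the $a=1-\delta_0$ system, contradicting $c^*(1-\delta_0)>c_-$. Nothing in your sketch substitutes for this construction.

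For positivity: you correctly observe that the linear bound $2\sqrt{1-a}$ collapses as $a\uparrow1$ and that one needs a lower bound uniform near $a=1$, but you never produce it --- and your fallback (a pushed-front bound for $a$ slightly below $1$, then pass to the limit by continuity) is circular relative to the paper's logic, since the nonlinear-selection statement near $a=1$ is Theorem \ref{th:speed selection}, which the paper derives \emph{as a consequence} of Theorem \ref{th: continuity}, not as an input. The paper's actual mechanism is different and self-contained: it first constructs traveling waves for the degenerate system $a=1$ by approximation from the monostable side (Proposition \ref{prop:existence-tw}), then proves by a pure ODE maximum-principle argument that \emph{any} wave speed at $a=1$ is strictly positive (Proposition \ref{prop: minimal tw speed c>0}: assume $c\le 0$, add the equations to obtain an equation for $U+V$ when $d\le1$, or for $bU+V$ when $d>1$, and derive a sign contradiction at an interior extremum); finally $c^{**}(1)=c^*(1)>0$ follows by sandwiching the $a=1$ solution between the systems with $a=1\pm\epsilon$ and invoking the known long-time convergence results of \cite{Wu Xiao Zhou} and \cite{Peng Wu Zhou}. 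Your alternative idea of a direct compactly supported subsolution at $a=1$ is plausible in spirit but is left entirely unconstructed, and the degeneracy of the linearization at $(0,1)$ (growth rate $1-v\approx0$) is exactly the obstruction you would have to overcome.
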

More precisely, by using the continuity argument, it follows immediately from Theorem \ref{th: continuity} that the spreading speed becomes nonlinear selection as $a\to 1$ from below. This generalizes the result given in \cite{HuangHan2011}.
\begin{theorem}\label{th:speed selection}
Let $r,d>0$ and $b>1$ in \eqref{system} be fixed. Then there exists $\varepsilon>0$ such that for all $a\in[1-\varepsilon,1]$ the minimal traveling wave speed satisfies $c^*(a)=c^{**}(a)>2\sqrt{1-a}$.
\end{theorem}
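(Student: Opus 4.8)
The plan is to deduce nonlinear selection near $a=1$ from a single observation at the degenerate value $a=1$ — that the true spreading speed is strictly larger than the linear speed there — and then to propagate this strict inequality to a left-neighborhood of $a=1$ by continuity. Two standard ingredients feed the argument. First, in the monostable regime the minimal traveling wave speed coincides with the spreading speed, $c^{*}(a)=c^{**}(a)$; this is the classical identification of the slowest wave speed with the spreading speed for monostable problems, applicable to \eqref{system} after the order-reversing change $v\mapsto 1-v$ renders the competitive system cooperative and thus amenable to comparison and spreading-speed theory. Second, as recalled in the introduction, linearizing \eqref{system} at the invaded state $(0,1)$ reduces the $u$-equation to $u_t=u_{xx}+(1-a)u$, whose linear spreading speed is $2\sqrt{1-a}$; this quantity is continuous on $(0,1]$ and vanishes exactly at $a=1$.

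With these in hand, I would introduce the gap function
\[
g(a):=c^{**}(a)-2\sqrt{1-a},\qquad a\in(0,1],
\]
and note that $g$ is continuous on $(0,1]$: the first term is continuous by Theorem \ref{th: continuity}, and the second is continuous in $a$. At the endpoint $a=1$ the two quantities decouple, since $2\sqrt{1-a}$ vanishes there whereas Theorem \ref{th: continuity} guarantees $c^{**}(1)>0$; hence $g(1)=c^{**}(1)>0$. By continuity of $g$ there exists $\varepsilon>0$ such that $g(a)>0$ for every $a\in[1-\varepsilon,1]$, that is, $c^{**}(a)>2\sqrt{1-a}$ throughout this interval.

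It then remains only to transfer this strict inequality to the minimal wave speed. For $a\in[1-\varepsilon,1)$ the system is of monostable type, so $c^{*}(a)=c^{**}(a)>2\sqrt{1-a}$, which is precisely the nonlinear selection asserted. The endpoint $a=1$ is handled by the same coincidence: the degenerate system sits at the threshold between the monostable and bistable regimes, still admits a well-defined minimal wave speed equal to $c^{**}(1)$, and $c^{**}(1)>0=2\sqrt{1-1}$. This recovers and generalizes the nonlinear-selection examples of \cite{HuangHan2011}.

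Because Theorem \ref{th: continuity} is assumed, the argument above is essentially a persistence-of-strict-inequality step, and the genuine analytic difficulty has already been absorbed into that theorem. The only point requiring care is the identity $c^{*}=c^{**}$ together with the continuity of $c^{**}$ holding uniformly up to the degenerate endpoint, so that passing to the limit $a\to1^{-}$ does not introduce a jump in the minimal wave speed; verifying that the monostable coincidence extends to the threshold value $a=1$ is the one place where I would be most careful, and it is the natural main obstacle in what is otherwise a short continuity argument.
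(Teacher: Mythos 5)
Your proposal is correct and takes essentially the same route as the paper: the paper also deduces Theorem~\ref{th:speed selection} as an immediate persistence-of-strict-inequality consequence of Theorem~\ref{th: continuity} (continuity of $c^{**}(a)$ together with $c^{**}(1)>0$, while $2\sqrt{1-a}\to 0$ as $a\to 1^-$), combined with the identification $c^*(a)=c^{**}(a)$ in the monostable range and at $a=1$. The endpoint coincidence $c^*(1)=c^{**}(1)$ that you single out as the delicate point is precisely what the paper supplies via Proposition~\ref{prop: c**=c*} and the proof of Theorem~\ref{th: continuity}, so your caution is well placed but the gap is already closed by the results you are allowed to assume.
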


\section{Preliminary}

\subsection{Comparison principle for the Lotka-Volterra competition system}%\label{sec:prop3.1}

\noindent

Let us start by briefly recalling the competitive comparison principle.
Consider a domain $
\Omega:=(t_1,t_2)\times(x_1,x_2)$ with  $0\le t_1<t_2\le + \infty$ and $-\infty\le x_1<x_2\le+\infty$. A (classical) super-solution is a pair  $(\overline{u},\underline{v})\in \Big[C^1\Big((t_1,t_2),C^2((x_1,x_2))\Big)\cap C_b\left(\overline \Omega\right)\Big]^2$
satisfying
\begin{equation*}
\overline u_t-\overline u_{xx}-\overline u(1-\overline u-a\underline v)\ge 0\quad \text{ and } \quad \overline v_t-d\overline v_{xx}-r\overline v(1-\overline v-b\overline u)\le 0\ \ \text{in}\ \ \Omega.
\end{equation*}
Similarly, a (classical) sub-solution $(\underline u, \overline v)$ requires  
\begin{equation*}
\overline u_t-\overline u_{xx}-\overline u(1-\overline u-a\overline v)\le 0\quad \text{ and } \quad \overline v_t-d\overline v_{xx}-r\overline v(1-\overline v-b\underline u)\ge 0\ \ \text{in}\ \ \Omega.
\end{equation*}

\begin{proposition}[Comparison Principle]%\label{prop: cp}
Let $(\overline{u},\underline{v})$ and $(\underline{u},\overline{v})$ be a super-solution and sub-solution of system (\ref{system}) in $\Omega$, respectively. If
$$
\overline{u}(t_1,x)\ge \underline{u}(t_1,x) \quad \text{and} \quad \underline{v}(t_1,x)\le\overline{v}(t_1,x),\quad\text{for all } x\in (x_1,x_2),
$$
and, for $i=1,2$, 
$$
\overline{u}(t,x_i)\ge \underline{u}(t,x_i) \quad \text{and} \quad \underline{v}(t,x_i)\le\overline{v}(t,x_i),\quad\text{for all } t\in(t_1,t_2),
$$
then, it holds
$$ 
\overline{u}(t,x) \ge\underline{u}(t,x) \quad \text{ and } \quad \underline{v}(t,x)\le\overline{v}(t,x),\quad  \text{for all } (t,x)\in\Omega.
$$
If $x_1=-\infty$ or $x_2=+\infty$, the hypothesis on the corresponding boundary condition can be omitted.
\end{proposition}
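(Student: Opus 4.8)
The plan is to reduce the statement to a scalar-type maximum principle for the two nonnegative differences $p:=\overline u-\underline u$ and $q:=\overline v-\underline v$, exploiting the fact that the competitive pairing $(\overline u,\underline v)$ versus $(\underline u,\overline v)$ turns the competition system into a \emph{cooperative} (quasimonotone) one. Concretely, I would subtract the defining differential inequalities of the super- and sub-solution. Using the algebraic identities $\overline u\,\underline v-\underline u\,\overline v=\underline v\,p-\underline u\,q$ and $\overline v\,\underline u-\underline v\,\overline u=\underline u\,q-\underline v\,p$, a direct computation gives
\begin{equation*}
p_t-p_{xx}-c_1\,p\ge a\,\underline u\,q,\qquad q_t-d\,q_{xx}-c_2\,q\ge rb\,\underline v\,p,
\end{equation*}
with bounded coefficients $c_1=1-(\overline u+\underline u)-a\underline v$ and $c_2=r\bigl(1-(\overline v+\underline v)-b\underline u\bigr)$. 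The crucial structural point is that the off-diagonal coupling coefficients $a\underline u$ and $rb\underline v$ are \emph{nonnegative} (the densities are taken nonnegative), so each inequality feeds the other in the right direction; this is precisely the cooperativity that makes a comparison principle available. The goal is then to show $p\ge0$ and $q\ge0$.

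First I would treat a bounded space-time box $\Omega_T:=(t_1,T)\times(x_1,x_2)$ with $T<t_2$ and $x_1,x_2$ finite. To neutralize the zeroth-order terms I pass to $P:=e^{-\lambda t}p$ and $Q:=e^{-\lambda t}q$, which satisfy $P_t-P_{xx}+(\lambda-c_1)P\ge a\underline u\,Q$ and $Q_t-dQ_{xx}+(\lambda-c_2)Q\ge rb\underline v\,P$, and I choose $\lambda$ so large that $\lambda>\|c_1\|_\infty+a\|\underline u\|_\infty$ and $\lambda>\|c_2\|_\infty+rb\|\underline v\|_\infty$. Arguing by contradiction, suppose $-m:=\min_{\overline{\Omega_T}}\min\{P,Q\}<0$; by compactness this value is attained, say by $P$ at a point $(t^*,x^*)$, which the hypotheses force off the parabolic boundary since $P,Q\ge0$ there. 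At such a point $P_t\le0$, $P_x=0$, $P_{xx}\ge0$ and $Q\ge-m$, whence
\begin{equation*}
-(\lambda-c_1)m\ \ge\ P_t-P_{xx}+(\lambda-c_1)P\ \ge\ a\underline u\,Q\ \ge\ -a\underline u\,m,
\end{equation*}
which yields $\lambda\le c_1+a\underline u$ at $(t^*,x^*)$, contradicting the choice of $\lambda$. Hence $P,Q\ge0$, i.e. $p,q\ge0$, on $\overline{\Omega_T}$.

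The main technical obstacle is the unbounded case $x_1=-\infty$ or $x_2=+\infty$, where the extremum need not be attained. To handle it I would perturb, setting $P^\varepsilon:=P+\varepsilon\,e^{Kt}\cosh(\gamma x)$ and similarly $Q^\varepsilon$; for $K$ large (depending only on the bounds of the coefficients) the weight satisfies a favorable inequality, so $P^\varepsilon,Q^\varepsilon$ obey the same coupled system, are strictly positive at $t=t_1$, and tend to $+\infty$ as $|x|\to\infty$ uniformly on $[t_1,T]$. The minimum is therefore attained at a finite point and the previous argument applies verbatim, giving $P^\varepsilon,Q^\varepsilon\ge0$; letting $\varepsilon\to0$ recovers $P,Q\ge0$ on $[t_1,T]\times\mathbb{R}$. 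Finally, since $T<t_2$ was arbitrary and $\overline\Omega=\bigcup_{T<t_2}[t_1,T]\times[x_1,x_2]$, the inequalities $\overline u\ge\underline u$ and $\underline v\le\overline v$ propagate to all of $\Omega$, completing the proof. I expect the bookkeeping in this last weighted-perturbation step, rather than the algebra revealing cooperativity, to be the only genuinely delicate part.
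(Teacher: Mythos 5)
The paper contains no proof of this proposition: it is stated as a known result, with the reader referred to \cite[\S 2.1]{Girardin Lam} for details, so there is no internal argument to compare yours against. Your blind proof is correct and self-contained, and it is essentially the standard argument that the cited literature formalizes: the pairing $(\overline u,\underline v)$ versus $(\underline u,\overline v)$ converts the competitive coupling into a cooperative system for $p=\overline u-\underline u$ and $q=\overline v-\overline v+\,(\overline v-\underline v)$ — that is, $q=\overline v-\underline v$ — and your algebra yielding $p_t-p_{xx}-c_1p\ge a\underline u\,q$ and $q_t-dq_{xx}-c_2q\ge rb\underline v\,p$ is exactly right; the $e^{-\lambda t}$ rescaling legitimately absorbs the zeroth-order terms because all functions lie in $C_b(\overline\Omega)$, so $c_1,c_2,a\underline u,rb\underline v$ are bounded; and the $\varepsilon e^{Kt}\cosh(\gamma x)$ barrier is the standard Phragm\'en--Lindel\"of device for forcing attainment of a putative negative minimum when $x_1=-\infty$ or $x_2=+\infty$ (it also silently covers the half-line case, where the finite endpoint is handled by the boundary hypothesis). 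What your proof buys that the paper's citation does not is an elementary, fully explicit argument requiring nothing beyond classical calculus; what the citation buys is the extension to \emph{generalized} (max/min of classical) sub- and super-solutions, which the paper actually uses later and which your pointwise second-order argument does not cover as stated. One point you should promote from a parenthetical remark to an explicit hypothesis: the entire cooperativity mechanism requires $\underline u\ge 0$ and $\underline v\ge 0$ in $\Omega$, since these are precisely the off-diagonal coefficients $a\underline u$ and $rb\underline v$; the Lotka--Volterra system is competitive only in the nonnegative quadrant, so if either function were allowed to go negative the sign structure your contradiction argument relies on would break down. This nonnegativity is not written in the proposition, but it is part of how the principle is stated in the cited reference and it holds in every application made in this paper, so your proof is correct once that standing assumption is recorded.
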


We refer to the clear exposition of  {\it generalized} sub- and super-solutions in \cite[\S 2.1]{Girardin Lam} for more details. In particular,  if $(\underline {u}_1,\overline{v})$  and $(\underline {u}_2, \overline{v})$ are both classical sub-solutions, then $(\max(\underline {u}_1,\underline{u}_2),\overline v)$ is a generalized sub-solution. Also, 
if $(\underline u,\overline{v}_1)$    and $(\underline u, \overline{v}_2)$ are both classical sub-solutions, then $(\underline u,\min(\overline{v}_1,\overline{v}_2))$ is a generalized sub-solution.

\subsection{Existence of traveling waves for the case $a=1$}
We first show the existence of traveling waves for the degenerate case $a=1$ and $b> 1$ by the approximation argument. 
By simple comparison argument, $c^*(a)$ is nonincreasing for $a\in(0,1)$.
Therefore,
\bea\label{c-}
c_-:=\lim_{a\to 1^-} c^*(a)\in[0,2)
\eea
is well-defined.

\begin{proposition}\label{prop:existence-tw}
    Let $r,d>0$ and $b>1$ be given.  For any 
        $c\geq c_-$, %\liminf_{a\to 1^-} c^*_{LV}(a):=c_-, 
     there exists a monotone solution $(c,U,V)$ satisfies \eqref{tw solution} with $a=1$. 
\end{proposition}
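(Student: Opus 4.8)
The plan is to realize the wave at $a=1$ as a locally uniform limit of genuine monostable fronts with $a=a_n\uparrow 1$, exploiting that $a=1$ sits exactly at the upper boundary of the monostable regime. First I would fix $c\ge c_-$ and choose any sequence $a_n\uparrow 1$ in $(0,1)$. Since $c^*$ is nonincreasing on $(0,1)$ and $c^*(a_n)\downarrow c_-$ by \eqref{c-}, the speeds $\tilde c_n:=\max\{c,c^*(a_n)\}$ satisfy $\tilde c_n\ge c^*(a_n)$ and $\tilde c_n\to c$. Hence for each $n$ the monostable existence theory supplies a monotone traveling wave $(\tilde c_n,U_n,V_n)$ solving \eqref{tw solution} with $a=a_n$ and $c=\tilde c_n$, and I normalize the otherwise free translation by imposing $U_n(0)=\tfrac12$.

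Compactness comes next. By the comparison principle (or the standard invariant-region argument for competition systems) each wave obeys $0<U_n<1$ and $0<V_n<1$; substituting these $L^\infty$ bounds back into the two profile ODEs controls $U_n''$ and $V_n''$ in terms of $U_n',V_n'$, and a routine interpolation produces uniform $C^2_{\mathrm{loc}}(\mathbb{R})$ bounds. Arzel\`a--Ascoli then yields a subsequence with $(U_n,V_n)\to(U,V)$ in $C^2_{\mathrm{loc}}(\mathbb{R})$, and passing to the limit (using $a_n\to1$, $\tilde c_n\to c$) shows that $(c,U,V)$ solves the profile system \eqref{tw solution} with $a=1$. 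The monotonicity $U_n'\le 0\le V_n'$ survives, so $U'\le 0\le V'$, while the normalization gives $U(0)=\tfrac12$; a direct check rules out $U\equiv\tfrac12$ (it would force $V\equiv\tfrac12$, incompatible with the $v$-equation since $b\neq 1$), so $U$ is nonconstant.

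It remains to pin the limits at $\pm\infty$. By monotonicity the limits $(U,V)(\pm\infty)$ exist, and since the derivatives vanish there they are equilibria of the kinetic system, hence lie in $\{(0,0),(1,0),(0,1)\}$. From $U(0)=\tfrac12$ and $U'\le 0$ we get $U(-\infty)\ge\tfrac12$, which forces $U(-\infty)=1$ and therefore $(U,V)(-\infty)=(1,0)$; likewise $U(+\infty)=0$, so $(U,V)(+\infty)\in\{(0,0),(0,1)\}$.

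The main obstacle is to exclude the degenerate alternative $(U,V)(+\infty)=(0,0)$, equivalently $V\equiv 0$; this is exactly where the loss of hyperbolicity of the state $(0,1)$ at $a=1$ bites, because the $u$-tail of the approximating waves flattens (its decay rate at $(0,1)$ tends to $0$ as $a_n\to1$) and the $v$-front could a priori drift to $+\infty$ in the limit. For $c<2$ the exclusion is immediate: $V\equiv 0$ would make $U$ a monotone front of the scalar Fisher--KPP equation $U''+cU'+U(1-U)=0$ joining $1$ to $0$, which does not exist for $c<2$, and since $c_-<2$ this already settles the whole range $[c_-,2)$. For the remaining large speeds I would prevent the two fronts from separating by a uniform coexistence bound: using the competitive structure together with the comparison principle I expect to bound $U_n+V_n$ from below by a positive constant independent of $n$ (the case $d=1$ follows by applying the maximum principle to $U_n+V_n$, and $d\neq 1$ should follow from a suitable weighted variant), so that $U$ and $V$ cannot both vanish on any interval; re-centering the escaping $v$-front and extracting a second limit then yields the required contradiction. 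Finally, once $(U,V)$ is known to be a genuine front, I would upgrade $U'\le 0\le V'$ to the strict inequalities $U'<0<V'$ demanded by \eqref{tw solution} by applying the strong maximum principle to the linear equations satisfied by $U'$ and $V'$.
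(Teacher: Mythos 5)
Your overall strategy---approximating by monostable fronts with $a_n\uparrow 1$, extracting a locally uniform limit by compactness, and identifying the limits at $\pm\infty$ through the equilibria of the kinetic system---is exactly the paper's, and your device $\tilde c_n=\max\{c,c^*(a_n)\}$ even handles the endpoint $c=c_-$ a bit more uniformly than the paper does. The difficulty you single out (the possible escape of one interface in the limit) is also the right one. However, your treatment of it has a genuine gap, and the gap is created by your choice of normalization. Because you pin the translation with $U_n(0)=\tfrac12$, the degenerate alternative you must exclude is $V\equiv 0$, i.e.\ the limit being a pure Fisher--KPP front in $U$. For $c<2$ you exclude this correctly, but for $c\ge 2$ such fronts do exist, and your only remaining tool is the asserted uniform coexistence bound $\inf_{\mathbb{R}}(U_n+V_n)\ge\delta>0$. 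Your maximum-principle argument gives this for $d=1$ (and in fact for $d\le 1$), but for $d>1$ the ``suitable weighted variant'' does not materialize: combining the equations as
$(U+\kappa V)''+c(U+\kappa V)'+c\kappa\bigl(\tfrac{1}{d}-1\bigr)V'+U(1-U-aV)+\tfrac{\kappa r}{d}V(1-V-bU)=0$,
the cross term $c\kappa\bigl(\tfrac{1}{d}-1\bigr)V'$ helps at an interior minimum only when $c\bigl(\tfrac{1}{d}-1\bigr)\ge 0$; for $c\ge 2>0$ this forces $d\le 1$. (The paper's own weighted combinations in Proposition~\ref{prop: minimal tw speed c>0} are used under $c\le 0$, where the sign works out.) So the bound is unproven in exactly the regime $d>1$, $c\ge 2$ that your argument needs.

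There are two ways to close this, and both reduce to the same one-line computation, which is the paper's key step and which never appears in your proposal. (i) Normalize the other component, $V_n(0)=\tfrac12$, as the paper does. Then the limit automatically satisfies $V(-\infty)=0$, $V(+\infty)=1$, hence $U(+\infty)=0$, and the only degenerate alternative is $U\equiv 0$; in that case $V$ is a monotone solution of $dV''+cV'+rV(1-V)=0$ with $0$ on the left and $1$ on the right, and integrating over $\mathbb{R}$ gives $c=-r\int_{\mathbb{R}}V(1-V)\,d\xi<0$, contradicting $c\ge c_-\ge 0$---with no restriction on the size of $c$ and no coexistence bound whatsoever. (ii) Keep your normalization but run your ``re-centering'' idea on its own: if the $v$-interface escapes, recenter at the points where $V_n=\tfrac12$ and pass to a second limit $(\hat U,\hat V)$; monotonicity and the equilibrium constraint force $(\hat U,\hat V)(-\infty)=(0,0)$, hence $\hat U\equiv 0$, and the same integration applied to $\hat V$ gives $c<0$, a contradiction. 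Note that in (ii) the contradiction comes from this sign identity, not from the coexistence bound; as written, your proposal makes the re-centering contingent on the unproven bound, so you should decouple the two. Your final remark on upgrading $U'\le 0\le V'$ to strict inequalities via the strong maximum principle is correct and is a point the paper leaves implicit.
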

\begin{proof}
Let $\{c_k\}\to c_-$ as $k\to \infty$. First, note that $c^*(a)$ is nonincreasing for $a\in(0,1)$. Hence,
for each $c_k>c_-\ge 0$, there exists a sequence $(c_k, U_n, V_n)$ such that $(U_n, V_n)$ is a monotone solution to \eqref{tw solution} with $a=a_n\in(0,1)$ and speed $c_k$.
First, by translation, we may assume that $V_n(0)=1/2$ for all $n$. Also, by transferring the equation into integral equations (using a variation of the constants formula),
it is not hard to see that $U'_n$ and  $V'_n$ are uniformly bounded. Together with the fact that
$0\leq U_n(\xi), V_n(\xi)\leq 1$ for all $\xi\in\mathbb{R}$
and $n\in\mathbb{N}$, Arzel\`{a}-Ascoli Theorem allows us to take a subsequence that converges to a pair of limit functions $(U,V)\in [C(\mathbb{R})]^2$ with
$0\leq U,V\leq 1$, locally uniformly in $\mathbb{R}$. Moreover,
using Lebesgue's dominated convergence theorem to integral equations, we can conclude that
$(c_k,U,V)$ satisfies \eqref{tw solution} with $a=1$ (since $a_n\nearrow 1$).
Moreover, we can see from the equations satisfied by $U$ and $V$ that $(U,V)\in [C^2(\mathbb{R})]^2$ and $U'\leq 0$ and $V'\geq0$ (since $U'_n\leq 0$ and $V'_n\geq0$ for all $n$), which implies that $(U,V)(\pm\infty)$ exists.

It remains to show that
\bea\label{BC-cond-Appendix}
(U,V)(-\infty)=(1,0),\quad  (U,V)(+\infty)=(0,1).
\eea
 Note that we must have
\bea\label{bdry-cond}
U(\pm\infty)[1-U(\pm\infty)-aV(\pm\infty)]=0,\quad V(\pm\infty)[1-V(\pm\infty)-U(\pm\infty)]=0.
\eea
Hence, $U(\pm\infty)$, $V(\pm\infty)\in\{0,1\}$. Since $V_n(0)=1/2$ for all $n$, we have $V(0)=1/2$ and thus
\bea\label{lim-of-V}
V(-\infty)=0,\quad V(+\infty)=1.
\eea
Also, note that from \eqref{bdry-cond} we see that $V(+\infty)=1$ implies that
\bea\label{lim-of-U}
U(+\infty)=0.
\eea

If $U(-\infty)=0$, then $U\equiv0$ due to $U'\leq 0$. However, by integrating the equation of $V$ over $(-\infty,+\infty)$,
it follows that
\beaa
c_k+r\int_{-\infty}^{\infty}V(\xi)(1-V(\xi))d\xi=0,
\eeaa
which implies that $c_k<0$. This contradicts with $c_k\ge 0$
(more precisely, from \cite{Kan-on1997} we see that $2\sqrt{1-a}\leq c_k\leq 2$).
As a result, we have $U(-\infty)=1$, which together with \eqref{lim-of-V} and \eqref{lim-of-U} implies
\eqref{BC-cond-Appendix}.
We, therefore, obtain a monotone solution with $a=1$ and $c=c_k$.

Furthermore, by standard approximation argument, we can fix $a=1$ and take $c_k\downarrow c_-$
such that the system \eqref{tw solution} also admits a monotone solution with $a=1$ and speed $c=c_-$. This completes the proof.
\end{proof}

\subsection{Existence of the minimal traveling wave speed for the case $a=1$}
We next show that the minimal wave speed exists for $a=1$ and $b>1$.
To prove this, we first show 
that any traveling wave speed for $a = 1$ and $b > 1$ must be strictly positive.

\begin{proposition}\label{prop: minimal tw speed c>0}
Let $(c,U,V)$ satisfy \eqref{tw solution} with $a=1$ and $b>1$. Then $c>0$.
\end{proposition}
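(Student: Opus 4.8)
The plan is to argue by contradiction: suppose the wave speed satisfies $c\le 0$ and derive a contradiction with the monotonicity $U'<0$ required by \eqref{tw solution}. The starting point is a pair of local integral identities obtained by integrating each equation in \eqref{tw solution} (with $a=1$) from a point $\xi$ to $+\infty$ and using $U(+\infty)=U'(+\infty)=V'(+\infty)=0$ and $V(+\infty)=1$. For the $U$-equation this gives
\[
U'(\xi)+cU(\xi)=\int_\xi^{\infty}U(1-U-V)\,d\eta,
\]
and for the $V$-equation
\[
dV'(\xi)-c\bigl(1-V(\xi)\bigr)=r\int_\xi^{\infty}V(1-V-bU)\,d\eta,
\]
both valid for every $\xi\in\mathbb{R}$; the integrals converge because the left-hand sides are finite, by the fundamental theorem of calculus applied to $U''+cU'$ and $dV''+cV'$ together with $U(1-U-V)=-(U''+cU')$.

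Next I would pin down the sign of $1-U-V$ near $+\infty$. Since the reaction Jacobian at the degenerate state $(0,1)$ has the real eigenvalues $0$ and $-r$, and the corresponding spatial characteristic equations $\mu^2+c\mu=0$ (for $U$) and $d\mu^2+c\mu-r=0$ (for $V$) have only real roots, the approach of $(U,V)$ to $(0,1)$ is non-oscillatory; in particular $V\nearrow 1$ with $V'>0$ and $V''<0$ for all large $\xi$. Feeding this into $dV''+cV'+rV(1-V-bU)=0$ and using $c\le 0$ gives $rV(1-V-bU)=-dV''-cV'>0$ for large $\xi$, hence $1-V-bU>0$ there. Because $b>1$ and $U>0$, this yields
\[
1-U-V=(1-V-bU)+(b-1)U>0\qquad\text{for all large }\xi.
\]

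Finally, fix $\xi$ large enough that $1-U-V>0$ on $[\xi,\infty)$. Then the right-hand side of the first identity is a strictly positive integral of the positive quantity $U(1-U-V)$, so $U'(\xi)+cU(\xi)>0$. Since $c\le 0$ and $U>0$, this forces $U'(\xi)>-cU(\xi)\ge 0$, contradicting $U'<0$; therefore $c>0$. I expect the main obstacle to be the rigorous justification of the non-oscillatory approach to $(0,1)$ — equivalently, that $1-V-bU$ has a definite sign near $+\infty$ rather than changing sign infinitely often. This is exactly where the degeneracy $a=1$ enters: it makes the $U$-linearization at $(0,1)$ purely first order ($\mu^2+c\mu=0$), so a decaying positive $U$-tail is incompatible with $c\le 0$. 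As a consistency check, integrating the two identities over all of $\mathbb{R}$ and eliminating $\int_{\mathbb{R}}UV\,d\eta$ gives $c(1+rb)=r\bigl(b\int_{\mathbb{R}}U(1-U)\,d\eta-\int_{\mathbb{R}}V(1-V)\,d\eta\bigr)$, so that $c>0$ is equivalent to $b\int_{\mathbb{R}}U(1-U)>\int_{\mathbb{R}}V(1-V)$, in which the factor $b>1$ reflects the stronger suppression of $v$ by $u$.
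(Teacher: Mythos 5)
Your Steps 1, 3 and 4 are sound: the half-line identity $U'(\xi)+cU(\xi)=\int_\xi^\infty U(1-U-V)\,d\eta$ is valid, and once $1-U-V>0$ is known on some half-line $[\xi_0,\infty)$, the contradiction with $U'<0$ under $c\le 0$ follows at once. The genuine gap is Step 2, exactly where you yourself suspect it: nothing in your argument proves that $V''<0$ (equivalently, that $1-V-bU$ keeps a definite sign) near $+\infty$. The inference ``real characteristic roots $\Rightarrow$ non-oscillatory approach $\Rightarrow$ $V''<0$ eventually'' is not available here, because at $a=1$ the equilibrium $(U,U',V,V')=(0,0,1,0)$ of the traveling-wave ODE is \emph{non-hyperbolic}: the $U$-linearization is $u''+cu'=0$, with roots $0$ and $-c\ge0$, so there is no stable-manifold or linearization theorem to invoke, and the decay of $U$ and $1-V$ may be polynomial rather than exponential. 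The Kan-on asymptotics recorded in Lemmas~\ref{lm: behavior around + infty} and \ref{lem:AS-infty:b>1} are proved only under (H1), i.e.\ $a>1$, and cannot be cited; the paper itself flags precisely this obstruction in Section 3 (``it is hard to clarify whether the traveling wave decays with an exponential order or a polynomial order''). Worse, the step is nearly circular: by the $V$-equation, $rV(1-V-bU)=-dV''-cV'$, so with $c\le0$ the eventual concavity of $V$ is essentially the same statement as the eventual positivity of $1-V-bU$; and a formal expansion at the degenerate equilibrium gives $1-V\approx bU$ at leading order, so the quantity whose sign you need is a higher-order correction. Making Step 2 rigorous (say, by a center-manifold analysis) is the actual content of the proposition, not a technicality.

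The paper avoids all tail analysis by a global maximum-principle argument. Assume $c\le 0$. If $d\le1$, add the $U$-equation to $\tfrac1d$ times the $V$-equation; at any interior maximum point $\xi_0$ with $(U+V)(\xi_0)\ge1$ the resulting identity
\[
(U+V)''+c(U+V)'+c\Bigl(\tfrac1d-1\Bigr)V'+\Bigl(U+\tfrac{r}{d}V\Bigr)(1-U-V)+\tfrac{r}{d}(1-b)UV=0
\]
has strictly negative left-hand side (using $c\le0$, $d\le1$, $V'>0$, $b>1$), so $U+V<1$ on all of $\mathbb{R}$. If $d>1$, the same device applied to $bU+V$ gives $bU+V>1$ everywhere. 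Either global inequality contradicts $c\le0$ through your own identities integrated over all of $\mathbb{R}$: in the first case $c=\int_{\mathbb{R}}U(1-U-V)\,d\eta>0$, in the second $-c=r\int_{\mathbb{R}}V(1-V-bU)\,d\eta<0$. If you want to salvage your write-up, replace the asymptotic sign claim of Step 2 by such a global sign claim; as it stands, your proof is incomplete at its crucial step.
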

\begin{proof} 
We assume $c\le 0$, then get the contradiction by some basic properties of traveling wave solutions. We first consider the case $d\le 1$.
We rewrite \eqref{tw solution} as 
\begin{equation*}
        \left\{
            \begin{aligned}
                &U''+cU'+U(1-U-V)=0,\\
                &V''+\frac{c}{d}V'+\frac{r}{d}V(1-V-bU)=0,\\
                &(U,V)(-\infty)=(1,0),\quad (U,V)(+\infty)=(0,1),\\
                &U'<0,\quad V'>0,\quad \xi\in\mathbb{R},
            \end{aligned}
            \right.
    \end{equation*}
and add the both sides of $U$-equation and $V$-equation to get
\begin{equation}\label{eq of U+V}
(U+V)''+c(U+V)'+c(\frac{1}{d}-1)V'+(U+\frac{r}{d}V)(1-U-V)+\frac{r}{d}(1-b)UV=0.
\end{equation}
%\begin{claim}\label{cl: U+V<1}
%        $U+V\le 1$ for all $z\in\mathbb{R}$, and $U+V<1$ for all $-\infty<\xi<+\infty$. 
%    \end{claim}
%\begin{proof}
We claim that 
\bea\label{U+V<1}
U(\xi)+V(\xi)<1 \quad \mbox{for all $-\infty<\xi<+\infty$}
\eea
If not, from the boundary conditions, we can find a local maximum point $\xi_0$ such that 
$$(U+V)(\xi_0)\geq 1,\ (U+V)'(\xi_0)=0,\ \ \text{and}\ \ (U+V)''(\xi_0)\leq0.$$
Then, since $c\leq 0$, $d\le 1$, $V'>0$, and $b>1$, the left hand of \eqref{eq of U+V} at $\xi=\xi_0$ is strictly negative. Therefore, we get the contradiction, and \eqref{U+V<1} holds.
 %$U+V\le 1$ for all $\xi\in\mathbb{R}$. 

%Now, by integrating the both sides of $U$-equation from $-\infty$ to $+\infty$, since Claim \ref{cl: U+V<1} the left hand becomes strictly negative unless $c=0$ and $U+V\equiv 1$ for all $\xi\in\mathbb{R}$. However, it is easy to prove from $V$-equation that $c=0$ and $U+V\equiv 1$ can not hold simultaneously.

To deal with the case $d> 1$, we rewrite \eqref{tw solution} as 
\begin{equation*}
        \left\{
            \begin{aligned}
                &bU''+cbU'+bU(1-bU-V)+b(b-1)U^2=0,\\
                &V''+\frac{c}{d}V'+\frac{r}{d}V(1-V-bU)=0,\\
                 &(U,V)(-\infty)=(1,0),\quad (U,V)(+\infty)=(0,1),\\
                &U'<0,\quad V'>0,\quad \xi\in\mathbb{R},
            \end{aligned}
            \right.
    \end{equation*}
and add the both sides of $U$-equation and $V$-equation to get
\begin{equation}\label{eq of bU+V}
(bU+V)''+c(bU+V)'+c(\frac{1}{d}-1)V'+(bU+\frac{r}{d}V)(1-bU-V)+b(1-b)U^2=0.
\end{equation}
    % \begin{claim}\label{cl: bU+V>1}
    %     $bU+V\ge 1$ for all $z\in\mathbb{R}$, and $bU+V>1$ for all $-\infty<\xi<+\infty$. 
    % \end{claim}
    % \begin{proof}
Next, we will show 
\bea\label{bU+V>1}
bU(\xi)+V(\xi)> 1 \quad \mbox{for all $-\infty<\xi<+\infty$}
\eea
%$bU+V\ge 1$ for all $\xi\in\mathbb{R}$. 
If not, we can find a local minimum point $\xi_0$ such that 
$$(bU+V)(\xi_0)\leq 1,\ (bU+V)'(\xi_0)=0,\ \ \text{and}\ \ (bU+V)''(\xi_0)\geq 0.$$
Then, since $c\leq 0$, $d> 1$, $V'>0$, and $b>1$, the left hand of \eqref{eq of bU+V} at $\xi=\xi_0$ is strictly positive. Therefore, we get the contradiction, and \eqref{bU+V>1} holds. This completes the proof.
%$bU+V\ge 1$ for all $\xi\in\mathbb{R}$. 
% Now, by integrating the both sides of $V$-equation from $-\infty$ to $+\infty$, since Claim \ref{cl: bU+V>1} the left hand becomes strictly negative unless $c=0$ and $bU+V\equiv 1$ for all $\xi\in\mathbb{R}$. 
% However, it is easy to get contradiction from $U$-equation, and we claim $c=0$ and $bU+V\equiv 1$ can not hold simultaneously.
\end{proof}

Define the {\it admissible set of traveling wave speed} as
    \bea\label{A}
    \mathcal{A}:=\{\tilde{c}\in\mathbb{R}|\, \mbox{\eqref{tw solution} with $a=1$ has a monotone solution with $c=\tilde{c}$}\}.
        \eea
In view of Proposition~\ref{prop:existence-tw} 
and Proposition~\ref{prop: minimal tw speed c>0}, we have

\begin{proposition}\label{cor:c_lv(1)}
Let $r,d>0$ and $b>1$ be given. Then the minimal speed of traveling wave solutions
of \eqref{tw solution} with $a=1$ exists and is positive. That is, 
\beaa
c^{*}(1):=\min \mathcal{A}\in(0, c_-]
\eeaa
is well-defined.
where $\mathcal A$ is defined in \eqref{A}.
\end{proposition}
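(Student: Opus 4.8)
The plan is to establish four things about $\mathcal{A}$: that it is nonempty, bounded below by $0$, that its infimum is attained, and that this infimum is strictly positive and at most $c_-$. I would begin with the two inclusions supplied by the preceding results. Proposition~\ref{prop:existence-tw} shows that for every $c\ge c_-$ there is a monotone solution, so $[c_-,\infty)\subseteq\mathcal{A}$; in particular $c_-\in\mathcal{A}$ and $\mathcal{A}\neq\emptyset$. Proposition~\ref{prop: minimal tw speed c>0} shows that every admissible speed is strictly positive, so $\mathcal{A}\subseteq(0,\infty)$ and $\mathcal{A}$ is bounded below. Consequently $c^*(1):=\inf\mathcal{A}$ is well-defined with $0\le c^*(1)\le c_-$, and the remaining task is to prove that this infimum is attained and is strictly positive.

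The core step is attainment. By definition of the infimum I would select a sequence $\{c_n\}\subseteq\mathcal{A}$ with $c_n\downarrow c^*(1)$, and for each $n$ take the associated monotone solution $(U_n,V_n)$ of \eqref{tw solution} with $a=1$ and speed $c_n$. Normalizing by translation so that $V_n(0)=1/2$ for all $n$, I would then rerun the compactness argument already carried out in the proof of Proposition~\ref{prop:existence-tw}: the profiles satisfy $0\le U_n,V_n\le1$ and, via the variation-of-constants (integral) formulation, have uniformly bounded derivatives, so Arzel\`a--Ascoli extracts a subsequence converging locally uniformly to a limit $(U,V)\in[C^2(\mathbb{R})]^2$ with $U'\le0$, $V'\ge0$, solving \eqref{tw solution} with $a=1$ at speed $c^*(1)$ (the solution structure passing to the limit by Lebesgue dominated convergence applied to the integral equations).

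The delicate point, and the step I expect to be the main obstacle, is to certify that this limit is a genuine front rather than a degenerate profile, i.e.\ to verify the boundary conditions \eqref{BC-cond-Appendix}. Here I would argue exactly as in Proposition~\ref{prop:existence-tw}: the normalization gives $V(0)=1/2$, and since $V(\pm\infty)\in\{0,1\}$ with $V$ monotone this forces $V(-\infty)=0$ and $V(+\infty)=1$, whence $U(+\infty)=0$. To exclude the degenerate alternative $U(-\infty)=0$, which by monotonicity would yield $U\equiv0$, I would integrate the $V$-equation over $\mathbb{R}$ to obtain $c^*(1)+r\int_{\mathbb{R}}V(1-V)\,d\xi=0$; since the integral is strictly positive this would force $c^*(1)<0$, contradicting $c^*(1)\ge0$. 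Hence $U(-\infty)=1$, the limit $(U,V)$ is a bona fide monotone traveling wave, and therefore $c^*(1)\in\mathcal{A}$, so that $\min\mathcal{A}=c^*(1)$ exists.

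Finally, strict positivity is immediate once attainment is secured: since $c^*(1)\in\mathcal{A}$, the triple $(c^*(1),U,V)$ satisfies \eqref{tw solution} with $a=1$ and $b>1$, so Proposition~\ref{prop: minimal tw speed c>0} applies directly and yields $c^*(1)>0$. Together with the bound $c^*(1)\le c_-$ from the first step, this gives $c^*(1)=\min\mathcal{A}\in(0,c_-]$, completing the proof.
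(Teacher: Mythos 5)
Your proof is correct and takes essentially the same route as the paper: nonemptiness and the lower bound from Propositions \ref{prop:existence-tw} and \ref{prop: minimal tw speed c>0}, attainment of the infimum by an approximation/compactness argument, the bound $c^*(1)\le c_-$ from Proposition \ref{prop:existence-tw}, and strict positivity by applying Proposition \ref{prop: minimal tw speed c>0} to the limiting wave. The only difference is that the paper compresses attainment into the phrase ``standard approximation argument,'' whereas you spell it out (correctly) by rerunning the Arzel\`a--Ascoli extraction and boundary-condition analysis from the proof of Proposition \ref{prop:existence-tw}.
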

\begin{proof}
By Proposition~\ref{prop:existence-tw} and Proposition~\ref{prop: minimal tw speed c>0},
we see that the set $\mathcal A$ is nonempty and has a lower bound zero. Therefore, 
$\inf \mathcal A$ is well-defined. Moreover, by standard approximation argument, 
we can conclude that \eqref{tw solution} also has a monotone solution with $c=\inf \mathcal{A}$ and $a=1$, which implies that $c^{*}(1):=\min \mathcal A\geq0$ is well-defined. 
In addition, in view of Proposition~\ref{prop:existence-tw}, we have $c^{*}(1)\leq c_-$. By Proposition~\ref{prop: minimal tw speed c>0}, we have 
$c^{*}(1)>0$. This completes the proof.
\end{proof}

\begin{remark}
Even though the degenerate case $a=1$ can be seen as the threshold between the monostable and bistable cases, Proposition \ref{cor:c_lv(1)}  indicates that the properties of the degenerate case are more closely related to those of the monostable case.
\end{remark}

\subsection{Asymptotic behavior of traveling waves near $\pm\infty$}%\label{subsec:Asymp behavior}
%\blue{I think we should remove this subsection and finish the introduction by the last subsection since this result in this subsection is not beneficial for the reader to understand our main result.}

\noindent

In this subsection, we provide the asymptotic behavior of $(U_c,V_c)$ near $\pm\infty$
under assumption {\bf(H1)}, where $(U_c,V_c)$ satisfies \eqref{tw solution} with speed $c$.
Some results are reported in \cite{Kan-On}.
Hereafter, we denote
\beaa
&&\sigma_u^+(c):=\frac{c+\sqrt{c^2+4(a-1)}}{2},\\ &&\sigma_v^{+}(c):=\frac{c+\sqrt{c^2+4rd}}{2d}.
\eeaa

\begin{lemma}[\cite{Kan-On}]\label{lm: behavior around + infty}
Assume that $a>1$ and $b>1$.
Let $(c,U,V)$ be a solution of the system (\ref{tw solution}).
Then there exist positive constants $l_{i=1,\cdots,4}$ such that the following hold:
\beaa
&&\lim_{\xi\rightarrow+\infty}\frac{U(\xi)}{e^{-\sigma_u^+(c)\xi}}=l_1,\\
&&\lim_{\xi\rightarrow+\infty}\frac{1-V(\xi)}{e^{-\sigma_u^+(c)\xi}}=l_2\quad \mbox{if $\sigma_v^{+}(c)>\sigma_u^+(c)$},\\
&&\lim_{\xi\rightarrow+\infty}\frac{1-V(\xi)}{\xi e^{-\sigma_u^+(c)\xi}}=l_3\quad
\mbox{if $\lambda_v^{+}(c)=\sigma_u^+(c)$},\\
&&\lim_{\xi\rightarrow+\infty}\frac{1-V(\xi)}{e^{-\sigma_v^{+}(c)\xi}}=l_4\quad
\mbox{if $\lambda_v^{+}(c)<\sigma_u^+(c)$}.
\eeaa
\end{lemma}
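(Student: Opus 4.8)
The plan is to linearize the traveling-wave system \eqref{tw solution} about the rest state $(U,V)=(0,1)$ and read the decay off the spectrum of the linearization, treating the nonlinear terms as higher-order perturbations. Setting $W:=1-V$, so that both $U$ and $W$ tend to $0$ as $\xi\to+\infty$, I would rewrite the system as a first-order system for $(U,U',W,W')$. The crucial structural feature is that, at the linear level, the $U$-equation decouples from $W$: expanding $U(1-U-aV)=(1-a)U+aUW-U^{2}$ and $rV(1-V-bU)=-rW+rbU+O(W^{2}+UW)$ shows that the linearization is block lower-triangular. Its eigenvalues are therefore the roots of $\mu^{2}+c\mu-(a-1)=0$ and of $d\nu^{2}+c\nu-r=0$; the two decaying (negative) roots are exactly $-\sigma_u^{+}(c)$ and $-\sigma_v^{+}(c)$, while the remaining two are positive. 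Hence $(0,1)$ is a hyperbolic saddle with a two-dimensional stable manifold, and since $(U,W)(\xi)\to(0,0)$, the orbit lies on that manifold and converges exponentially.

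\textbf{Asymptotics of $U$.} The component $U$ satisfies $U''+cU'+(1-a)U=aUW-U^{2}$, a scalar linear equation whose right-hand side is quadratically small in $(U,W)$. Using the Duhamel representation built from the two exponentials $e^{-\sigma_u^{+}\xi}$ and $e^{\mu_u^{-}\xi}$ (the $U$-block eigenvalues) together with a contraction/bootstrap argument on an exponentially weighted space, I would obtain $U(\xi)=l_1 e^{-\sigma_u^{+}(c)\xi}(1+o(1))$. The $U$-block has a single stable direction, so a solution with $U\to 0$ must enter along it; positivity $U>0$ (from $U'<0$ and $U(+\infty)=0$) then forces $l_1>0$.

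\textbf{Asymptotics of $1-V$.} With $U$ known to leading order, $W$ solves $dW''+cW'-rW=-rbU+(\text{quadratic})$. Solving this inhomogeneous linear equation by variation of parameters, the decaying solution is a superposition of the homogeneous mode $e^{-\sigma_v^{+}\xi}$ and a particular response to the forcing $U\sim l_1 e^{-\sigma_u^{+}\xi}$. Substituting $W_p=Ke^{-\sigma_u^{+}\xi}$ gives $K=-rbl_1/P(-\sigma_u^{+})$ with $P(\nu)=d\nu^{2}+c\nu-r$, which is finite and nonzero precisely when $\sigma_u^{+}\neq\sigma_v^{+}$. This produces the trichotomy of the statement: if $\sigma_v^{+}>\sigma_u^{+}$ the forced mode decays slower and dominates, giving $1-V\sim l_2 e^{-\sigma_u^{+}\xi}$; if $\sigma_v^{+}<\sigma_u^{+}$ the homogeneous mode dominates, giving $1-V\sim l_4 e^{-\sigma_v^{+}\xi}$; and in the resonant case $\sigma_v^{+}=\sigma_u^{+}$ the particular solution must be taken as $K\xi e^{-\sigma_u^{+}\xi}$, yielding the extra factor $\xi$ and $1-V\sim l_3\,\xi e^{-\sigma_u^{+}\xi}$. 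Positivity of $l_2,l_3,l_4$ follows from $W=1-V>0$ near $+\infty$ together with the sign of $P(-\sigma_u^{+})$, which is negative exactly when $\sigma_u^{+}<\sigma_v^{+}$.

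\textbf{Main obstacle.} The delicate part is not the formal computation but justifying that the quadratic perturbations are genuinely negligible at leading order, and especially controlling the resonant case $\sigma_u^{+}=\sigma_v^{+}$. I expect the cleanest rigorous route to be via the Laplace transform of $U$ and $W$: the abscissa of convergence equals the rightmost singularity, and the \emph{order} of the pole fixes the power of $\xi$, so a double pole at $-\sigma_u^{+}=-\sigma_v^{+}$ directly encodes the $\xi e^{-\sigma_u^{+}\xi}$ behavior through an Ikehara-type Tauberian theorem. Alternatively, the asymptotic-integration theorems of Levinson and Coddington--Levinson applied to the first-order system deliver the same conclusion, the resonance being handled by the Jordan-block structure that appears when the two stable eigenvalues coalesce.
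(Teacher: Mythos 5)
This lemma is not proved in the paper at all: it is quoted, with attribution, from Kan-On \cite{Kan-On} (even the typos $\lambda_v^+$ for $\sigma_v^+$ suggest direct transcription), so there is no internal proof to compare yours against. Judged on its own merits, your argument is the standard route to such asymptotics and is essentially sound: set $W=1-V$, observe the linearization at $(U,W)=(0,0)$ is block lower-triangular with stable roots exactly $-\sigma_u^+(c)$ and $-\sigma_v^+(c)$, get exponential convergence from hyperbolicity, extract the $U$-asymptotics from the scalar equation, and then treat the $W$-equation as a linear equation forced by $U$, with the three cases corresponding to which of the forced mode $e^{-\sigma_u^+\xi}$ and the homogeneous mode $e^{-\sigma_v^+\xi}$ decays slower, the resonant case producing the factor $\xi$.

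Two points deserve care. First, positivity of $U$ does not by itself force $l_1>0$: a priori $U$ could be positive yet decay strictly faster than $e^{-\sigma_u^+\xi}$, with the stable-mode coefficient vanishing and only the quadratically forced response surviving. The clean fix is the one you name in your closing paragraph: view the $U$-equation as the homogeneous linear equation $U''+cU'-\bigl(a-1+h(\xi)\bigr)U=0$ with coefficient perturbation $h:=U-aW$, which is $L^1$ by the exponential convergence already established; Levinson's theorem then gives a fundamental system $U_1\sim e^{-\sigma_u^+\xi}$, $U_2\sim e^{\mu\xi}$ with $\mu>0$, so $U\to0$ forces $U=\alpha U_1$, and $\alpha=0$ would give $U\equiv0$, whence $l_1=\alpha>0$. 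The same device (or your variation-of-parameters computation) handles $W$, where positivity of $W$ is genuinely needed to exclude a vanishing homogeneous coefficient in the case $\sigma_v^+<\sigma_u^+$, exactly as you observe via the sign of $P(-\sigma_u^+)$. Second, a small slip: your intermediate expansion reads $rV(1-V-bU)=-rW+rbU+O(W^2+UW)$, but in fact $rV(1-V-bU)=rW-rbU+O(W^2+UW)$; the sign flip coming from $V''=-W''$ restores the working equation you actually use, $dW''+cW'-rW=-rbU+\text{quadratic}$, so the subsequent analysis and all sign conclusions ($P(-\sigma_u^+)<0$ iff $\sigma_u^+<\sigma_v^+$, hence $l_2,l_3,l_4>0$) are correct.
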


\medskip
To describe the asymptotic behavior of $(U,V)$ near $-\infty$, we define
\beaa
&&%\mu_{u}^{-}(c):=\frac{-c-\sqrt{c^2+4}}{2}<0<
\mu_{u}^{+}(c):=\frac{-c+\sqrt{c^2+4}}{2},\\
&&%\mu_v^{-}(c):=\frac{-c-\sqrt{c^2+4rd(b-1)}}{2d}<0<
\mu_v^{+}(c):=\frac{-c+\sqrt{c^2+4rd(b-1)}}{2d}.
\eeaa

\begin{lemma}[\cite{Kan-On}]\label{lem:AS-infty:b>1}
Assume that $a>1$ and $b>1$.
Let $(c,U,V)$ be a solution of the system (\ref{tw solution}). Then
there exist positive constants $l_{i=5,\cdots,8}$ such that
\beaa
&& \lim_{\xi\rightarrow-\infty}\frac{V(\xi)}{e^{\mu_v^+(c)\xi}}=l_5,\\
&&\lim_{\xi\rightarrow-\infty}\frac{1-U(\xi)}{e^{\mu_v^+(c)\xi}}=l_{6}\quad \mbox{if $\mu_{u}^{+}(c)>\mu_v^+(c)$},\\
&&\lim_{\xi\rightarrow-\infty}\frac{1-U(\xi)}{|\xi|e^{\mu_v^+(c)\xi}}=l_{7}\quad
\mbox{if $\mu_{u}^{+}(c)=\mu_v^+(c)$},\\
&&\lim_{\xi\rightarrow-\infty}\frac{1-U(\xi)}{e^{\mu_u^+(c)\xi}}=l_{8}\quad \mbox{if $\mu_{u}^{+}(c)<\mu_v^+(c)$}.
\eeaa
\end{lemma}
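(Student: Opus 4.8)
The statement describes how a bounded orbit of the traveling-wave ODE decays as it leaves the rest state $(U,V)=(1,0)$, so my plan is a linearization / asymptotic-integration argument. First I would pass to the perturbation variables $P:=1-U$ and $V$, both positive near $-\infty$ (since $U'<0$, $U(-\infty)=1$ force $U<1$, while $V'>0$, $V(-\infty)=0$ force $V>0$) and both tending to $0$. Substituting $U=1-P$ into \eqref{tw solution} yields
\beaa
&&P''+cP'-P=-aV-P^2+aPV,\\
&&dV''+cV'-r(b-1)V=rV(V-bP).
\eeaa
The crucial structural features are that the right-hand sides are quadratically small and that the $V$-equation decouples from $P$ at linear order. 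The homogeneous characteristic polynomials are $q_u(\mu):=\mu^2+c\mu-1$ and $q_v(\mu):=d\mu^2+c\mu-r(b-1)$; since $b>1$, each has $q_u(0)=-1<0$, $q_v(0)=-r(b-1)<0$ and positive leading coefficient, hence exactly one positive and one negative root (independently of the sign of $c$), the positive ones being precisely $\mu_u^+(c)$ and $\mu_v^+(c)$. As the orbit converges to the equilibrium as $\xi\to-\infty$, it lies on the unstable manifold, so the modes belonging to the negative (growing) roots carry zero coefficient.

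Second, I would pin down $V$. Writing the $V$-equation in variation-of-parameters form against the fundamental pair $e^{\mu_v^\pm(c)\xi}$ and using $V(-\infty)=0$, $V>0$, the growing mode must vanish while the quadratic source $rV(V-bP)$ is negligible at leading order, giving $V(\xi)=l_5\,e^{\mu_v^+(c)\xi}(1+o(1))$ with $l_5>0$ (positivity because $V\not\equiv0$ and the only admissible decaying mode in the decoupled $V$-equation is $e^{\mu_v^+\xi}$). This is the first asymptotic and fixes the leading forcing $-aV\sim -al_5\,e^{\mu_v^+(c)\xi}$ in the $P$-equation. Third, I would solve the $P$-equation by variation of parameters against $e^{\mu_u^\pm(c)\xi}$, again discarding the growing mode. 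The inhomogeneity $-al_5\,e^{\mu_v^+(c)\xi}$ produces a particular solution whose rate is set by comparing $\mu_v^+$ with $\mu_u^+$, yielding the three regimes: if $\mu_u^+>\mu_v^+$ the non-resonant particular term $l_6\,e^{\mu_v^+\xi}$ with $l_6=-al_5/q_u(\mu_v^+)$ decays more slowly and dominates; if $\mu_u^+<\mu_v^+$ the homogeneous term $l_8\,e^{\mu_u^+\xi}$ dominates; and if $\mu_u^+=\mu_v^+$ resonance produces $l_7|\xi|e^{\mu_u^+\xi}$, with $2\mu_u^++c=\sqrt{c^2+4}$ entering the coefficient. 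The sign of each leading constant is then forced by $P>0$ and the sign of $q_u(\mu_v^+)$: since $q_u<0$ on $(0,\mu_u^+)$ and $q_u>0$ on $(\mu_u^+,\infty)$, one gets $l_6=-al_5/q_u(\mu_v^+)>0$ in the first case; in the second case a vanishing $l_8$ would force $P\sim -al_5\,q_u(\mu_v^+)^{-1}e^{\mu_v^+\xi}<0$ (as $q_u(\mu_v^+)>0$ there), contradicting $P>0$, so $l_8>0$; and a parallel computation gives $l_7=al_5/\sqrt{c^2+4}>0$.

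The step I expect to be the main obstacle is making these formal expansions rigorous, i.e.\ proving that the quadratic remainders do not corrupt the leading-order rate, especially in the resonant case where the two admissible rates differ only by a polynomial factor. I would handle this by recasting the variation-of-parameters formulas as fixed-point equations in a Banach space with the appropriate weight ($e^{-\mu\xi}$, or $|\xi|^{-1}e^{-\mu\xi}$ in the resonant case) and showing the resulting nonlinear map is a contraction on a small ball near $-\infty$; this simultaneously produces the limits $l_i$ and the $o(1)$ errors. Alternatively, once the perturbation coefficients are shown to be exponentially small, one may invoke the standard Coddington–Levinson/Hartman asymptotic-integration theorem. The analogous lemma near $+\infty$ (Lemma~\ref{lm: behavior around + infty}) follows by the identical scheme with the roles of the two equilibria and the quadratics interchanged.
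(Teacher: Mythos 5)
The paper contains no proof of this lemma: it is quoted directly from Kan-on \cite{Kan-On}, so there is no in-paper argument to compare yours against, and the relevant question is whether your sketch is a sound reconstruction of the standard ODE argument behind such tail asymptotics. It essentially is. Your reduction to $P=1-U$ and $V$, the characteristic polynomials $q_u(\mu)=\mu^2+c\mu-1$ and $q_v(\mu)=d\mu^2+c\mu-r(b-1)$ whose unique positive roots are exactly $\mu_u^+(c)$ and $\mu_v^+(c)$, the triangular structure (the $V$-equation decouples at linear order), the three-case variation-of-parameters analysis with resonance producing the factor $|\xi|$, and the sign computations for $l_6,l_7,l_8$ via the sign of $q_u(\mu_v^+)$ together with $P>0$ are all correct and match the definitions of $\mu_u^+,\mu_v^+$ used in the paper.

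Two points in your sketch should be made explicit rather than asserted. First, $l_5>0$ is not merely a matter of discarding the growing mode: a priori the orbit could leave the equilibrium tangent to the $\mu_u^+$-eigendirection, whose eigenvector has zero $V$-component, in which case $V$ would decay strictly faster than $e^{\mu_v^+\xi}$ and the first limit would vanish. What rules this out is the scalar structure you already identified: writing $dV''+cV'+g(\xi)V=0$ with $g(\xi)=-r(b-1)+rbP-rV\to-r(b-1)$ exponentially, Levinson's theorem yields a fundamental pair asymptotic to $e^{\mu_v^\pm\xi}$ as $\xi\to-\infty$; a solution tending to $0$ has no component on the unbounded one, so a nontrivial positive decaying solution is a positive multiple of the decaying one, giving $l_5>0$ exactly. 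Second, before any expansion is legitimate you need some a priori exponential decay of $P$ and $V$ near $-\infty$ (from the unstable-manifold theorem at the hyperbolic equilibrium), so that the quadratic remainders decay strictly faster than the linear rates and are integrable against your weights; this is precisely the bootstrap your weighted fixed-point scheme performs, and in the resonant case the weight $|\xi|^{-1}e^{-\mu\xi}$ you propose is the right one. With these two points spelled out, your argument is a complete proof of Lemma \ref{lem:AS-infty:b>1}, and the same scheme indeed gives Lemma \ref{lm: behavior around + infty}.
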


\section{Continuity of the spreading speed}%\label{sec: sc 2}

%In this section, 
%we provide a sufficient condition under which the minimal traveling wave speed $c^*_{LV}$ is nonlinearly selected, {\it i.e.}, $c^*_{LV}>2\sqrt{1-a}$.
%Throughout this section,
%we fix $r,d>0$, $b>1$ and treat $a>0$ as a continuously varying parameter, with particular focus on the degenerate case  $a=1$. 
%We will show the existence of the minimal traveling wave speed for this degenerate case and discuss the continuity of the spreading speed on the parameter $a$. 
%Here, the spreading speed is always defined by the solution starting from the initial datum satisfying \eqref{initial datum}. 
%As a result, we find that when $a\in(0,1)$ is sufficiently close to the critical value $a=1$, the spreading speed becomes nonlinearly selected.

In this section, 
we shall discuss the continuity of spreading speed of \eqref{system} on the range $a>0$ and $b>1$. 
For any given $b>1$, the system \eqref{system} is of bistable type when $a>1$, which admits a unique traveling wave solution and a unique traveling wave speed $\hat c(a)$ satisfying \eqref{tw solution} (see \cite{Kan-On}). Moreover, it has been proved in \cite{Carrere, Peng Wu Zhou} that the spreading speed coincides with the speed of the bistable front; namely, $c^{**}(a)=\hat c(a)$ for all $a>1$.
On the other hand, for any given $b>1$,
the system \eqref{system} with $a\in(0,1)$ is of monostable type which admits the minimal traveling wave speed $c^*(a)$ satisfying \eqref{tw solution} (see \cite{Kan-on1997}). Furthermore, it has been proved in \cite{Lewis Li Weinberger 2} that $c^*(a)=c^{**}(a)$ for all $a\in(0,1)$. However, for the degenerate case $a=1$, the relation between the spreading speed $c^{**}(1)$ and the minimal traveling speed $c^*(1)$ has not been studied yet. 
Note that, the super and sub-solution method can not be applied for the degenerate case $a=1$ since it is hard to clarify whether the traveling wave decays with an exponential order or a polynomial order.

%To emphasize the dependence on both $a$ and $b$, we denote the spreading speed by $c^{**}(a,b)$.
It is well known from \cite{Lewis Li Weinberger 2} for $0<a<1$ and $b>1$,
and  \cite{Carrere, Peng Wu Zhou} for $a,b>1$ that
%\beaa%\label{spreading speed}
%c^{**}(a,b)=\begin{cases}
%c^{*}(a,b) &\quad \mbox{ if $0<a<1$ and  $b>1$;}\\
%\hat{c}(a,b) &\quad \mbox{ if $a>1$ and $b>1$},
%\end{cases}
%\eeaa
%where $c^{*}(a,b)$ denotes the minimal traveling wave speed; while 
%$\hat{c}(a,b)$ denotes the unique wave speed of bistable traveling waves.   
%It is well known that 
$c^{**}(a)$ is continuous on the following region
\beaa
\{a|\, 0<a<1, \}\cup \{a|\, a>1 \}.
\eeaa
Therefore, to complete the proof of Theorem \ref{th: continuity}, it is sufficient to fix $r,d>0$ and $b>1$, and prove $c^{**}(a)$ is continuous at $a=1$. 

\begin{remark}
Notice that, by fixing $b<1$, for all $a\in\mathbb{R}^+$, there exists a minimal traveling wave speed that always corresponds to the spreading speed, but with different boundary conditions as those in \eqref{tw solution}. For more details, see \cite{Wu Xiao Zhou 2}. The continuity of $c^{**}(a)$ with $b<1$ follows immediately from the approximation argument.
Notice that the propagation phenomenon of the critical case $a=b=1$ has been well studied in \cite{Alfaro Xiao}. Due to the degeneracy on both $a=1$ and $b=1$, the propagation property of the solution does not depend on $a$ and $b$, but on the value of $dr$. As a result, it is not sensible to consider the continuity on $a=b=1$, and we exclude it from our discussion. 
\end{remark}

%Our main result reads as follows:
%\begin{theorem}\label{th: continuity of the spread speed}
%Let $r,d>0$ and $b>1$ in \eqref{system} be fixed. Then  the spreading speed $c^{**}(1)$ of \eqref{system} with %$a=1$ exists and coincides with the minimal traveling wave speed, {\it i.e.}, $c^{**}(1)=c^{*}(1)>0$. 
%\end{theorem}
Now, we fix $b>1$, $r,d>0$, and establish the continuity of the spreading speed of \eqref{system} on $a\in(0,\infty)$. 
 It suffices to discuss the continuity of 
the spreading speed $c^{**}(a)$
at $a=1$ on both direction $a\to 1^+$ and $a\to 1^-$. 
Recall from \cite{Kan-On} that, for $a>1$, 
the system \eqref{system} admits a unique traveling wave solution with unique speed $\hat{c}(a)\in[-2\sqrt{rd},2]$. Moreover, 
$\hat{c}(a)$ is strictly decreasing for $a>1$.
Therefore, we can 
define 
\beaa%\label{c+}
c_+:=\lim_{a\to 1^+}\hat c(a).
\eeaa

To prove the continuity of %$c^*_{LV}(a)$, 
$c^{**}(a)$,
we first need to figure out the relation %between 
among $c^*(1)$, $c_+$ and $c_-$,
where $c_-$ is defined in \eqref{c-}.

\begin{lemma}\label{prop: c**<hat c}
Let $r,d>0$ and $b>1$ be given. 
It holds that %Then we have $c^*_{LV}(1)\ge c_+$.
\beaa
c_+\le c^*(1)\le c_-.
\eeaa
\end{lemma}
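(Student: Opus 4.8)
The second inequality $c^{*}(1)\le c_-$ is already part of Proposition~\ref{cor:c_lv(1)}, so the plan is to concentrate entirely on proving $c_+\le c^{*}(1)$. The strategy is to use the uniquely determined bistable front for $a>1$ as a \emph{sub-solution} of the degenerate ($a=1$) system travelling at its own speed, and then to compare it against the minimal degenerate front $(c^{*}(1),U_*,V_*)$ furnished by Proposition~\ref{cor:c_lv(1)}.

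The first step is an algebraic observation. Fix $a>1$ and let $(\hat c(a),\hat U_a,\hat V_a)$ solve \eqref{tw solution}. Inserting this pair into the $a=1$ profile operator and using the exact $a$-equation, the $U$-equation leaves the residual $(a-1)\hat U_a\hat V_a\ge 0$, while the $V$-equation is untouched (it carries no $a$). Hence, in the moving frame of speed $\hat c(a)$, the pair $\underline u(t,x):=\hat U_a(x-\hat c(a)t)$, $\overline v(t,x):=\hat V_a(x-\hat c(a)t)$ satisfies the sub-solution inequalities for \eqref{system} with $a=1$: the $u$-inequality reads $\underline u_t-\underline u_{xx}-\underline u(1-\underline u-\overline v)=-(a-1)\hat U_a\hat V_a\le 0$, and the $v$-inequality holds with equality. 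Thus $(\hat U_a,\hat V_a)$ is a classical sub-solution of the degenerate system in the competitive order.

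For the second step, assume for contradiction that $\hat c(a)>c^{*}(1)$. I would choose a shift $z$ so that $\hat U_a(\xi)\le U_*(\xi+z)$ and $\hat V_a(\xi)\ge V_*(\xi+z)$ for all $\xi$, place the sub-solution below the minimal front at $t=0$, and invoke the Comparison Principle on $(0,\infty)\times\mathbb{R}$ (no lateral boundary data are needed since $x_1=-\infty$, $x_2=+\infty$). The ordering then persists for all $t>0$; tracking, say, the level set $\{U=1/2\}$ of each profile and comparing their positions $\hat c(a)t+O(1)$ and $c^{*}(1)t+O(1)$, the monotone ordering forces $\hat c(a)t+O(1)\le c^{*}(1)t+O(1)$ for all $t$, whence $\hat c(a)\le c^{*}(1)$, a contradiction. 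Therefore $\hat c(a)\le c^{*}(1)$ for every $a>1$, and letting $a\to 1^+$ (recall $\hat c$ is decreasing in $a$, so $c_+=\sup_{a>1}\hat c(a)$) gives $c_+\le c^{*}(1)$, completing the chain $c_+\le c^{*}(1)\le c_-$.

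The main obstacle is the construction of the initial shift $z$, which rests on a decay comparison at $\pm\infty$: one needs $\hat U_a$ to decay no slower than $U_*$ at $+\infty$ and $1-\hat V_a$ no slower than $1-V_*$ there, while $1-\hat U_a$ and $\hat V_a$ must decay no faster than their degenerate counterparts at the opposite end. Under the contradiction hypothesis $\hat c(a)>c^{*}(1)$, the monotonicity in $c$ of the rate functions $\sigma_u^+,\sigma_v^+,\mu_u^+,\mu_v^+$, together with $\sigma_u^+(\hat c(a))\ge\hat c(a)$, delivers all four comparisons through Lemmas~\ref{lm: behavior around + infty} and \ref{lem:AS-infty:b>1}. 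The genuinely delicate point is the decay of $U_*$ and of $1-V_*$ at the ends where the $a=1$ linearization degenerates, where it is unclear whether the order is exponential (rate $c^{*}(1)$) or merely polynomial; this ambiguity, however, is harmless, since in either case the strictly faster bistable profile decays fast enough for the required \emph{one-sided} placement. Concretely, a single sufficiently negative $z$ (translating the whole degenerate front to the right, making $U_*(\cdot+z)$ larger and $V_*(\cdot+z)$ smaller) enforces both $\hat U_a\le U_*(\cdot+z)$ and $\hat V_a\ge V_*(\cdot+z)$ simultaneously.
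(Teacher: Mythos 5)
Your reduction to $c_+\le c^*(1)$ and the algebraic observation that the bistable front $(\hat U_a,\hat V_a)$, in its own moving frame, is a sub-solution of the $a=1$ system are correct, and the front-versus-front comparison would indeed force $\hat c(a)\le c^*(1)$ \emph{provided} the initial ordering $\hat U_a\le U_*(\cdot+z)$, $\hat V_a\ge V_*(\cdot+z)$ can be arranged. That placement is exactly where your proposal has a genuine gap, and you have located it without closing it. At $+\infty$ the linearization of the degenerate system at $(0,1)$ is degenerate in the $u$-component (characteristic rates $0$ and $c$), so Lemmas~\ref{lm: behavior around + infty} and \ref{lem:AS-infty:b>1} control only the bistable front; for the degenerate minimal front you additionally need \emph{lower} tail bounds of the form $U_*(\xi)\ge \kappa e^{-\beta\xi}$ and $1-V_*(\xi)\ge \kappa e^{-\beta\xi}$ with $\beta<\min\{\sigma_u^+(\hat c(a)),\sigma_v^+(\hat c(a))\}$, and these are nowhere established --- the paper explicitly remarks that for $a=1$ it is unclear whether the wave decays at an exponential or a polynomial order, and this is precisely why it avoids any sub/super-solution placement against the degenerate front. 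Your assertion that the ambiguity is ``harmless'' covers only the polynomial (fat-tail) alternative; in the exponential alternative you must still rule out decay strictly faster than rate $c^*(1)$ (i.e., exclude the strong-stable behavior being beaten by nonlinear corrections), and the natural differential-inequality route (getting $U_*''+c^*(1)U_*'\le 0$ from $1-U_*-V_*\ge 0$, whence $U_*'\ge -c^*(1)U_*$) is only justified when $d\le 1$, since the proof of Proposition~\ref{prop: minimal tw speed c>0} yields $U+V<1$ for $d\le 1$ but the reverse-type inequality $bU+V>1$ for $d>1$. As written, the shift $z$ is therefore not constructed, and this is the crux of the argument, not a routine detail.

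The paper's proof is genuinely different and sidesteps the tail problem entirely by reversing the comparison: under the contradiction hypothesis $c^*(1)<c_+$, it picks $\delta_0>0$ via the continuity and strict monotonicity of $\hat c(\cdot)$ so that $c^*(1)<\hat c(1+\delta_0)<c_+$, uses the degenerate front $(c^*(1),U_1,V_1)$ as a \emph{super-solution} of the bistable system with $a=1+\delta_0$ (the residual there is $\delta_0 U_1V_1\ge 0$), and compares not with another front but with the solution of the \emph{Cauchy problem} with data \eqref{initial datum}, whose spreading speed is exactly $\hat c(1+\delta_0)$ by \cite{Peng Wu Zhou}. Dominating a compactly supported $u_0$ and lying below $v_0\equiv 1$ requires only a finite translation $x_0$ and no asymptotic analysis whatsoever; the comparison principle then caps the bistable spreading speed by $c^*(1)$, contradicting $c^*(1)<\hat c(1+\delta_0)$. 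If you wish to salvage your front-versus-front scheme, you would first have to prove the missing lower tail estimates for $(U_*,V_*)$ at $+\infty$, uniformly in $d>0$; alternatively, adopt the paper's device of comparing against compactly supported data, which is what makes the argument short.
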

\begin{proof}
The relation between $c^*(1)$ and $c_-$ follows from Proposition~\ref{cor:c_lv(1)}.
We now show that $c_+\le c^*(1)$ using
%This lemma follows immediately from 
some comparison argument. Assume by contradiction that $c^*(1)< c_+$. Then,
from the continuity and the monotonicity of $\hat{c}(a)$ for $a>1$,
there exists $\delta_0>0$ small enough such that the traveling wave speed of \eqref{system} with $a=1+\delta_0$ satisfying 
\bea\label{order-c-hat}
c^*(1)<\hat c(1+\delta_0)<c_{+}.
\eea
Denote $(c^*(1), U_1,V_1)$ as the traveling wave solution of \eqref{system} with $a=1$. It is easy to check that 
$$\Big(U_1(x-c^*(1)t-x_0),V_1(x-c^*(1)t-x_0)\Big)$$ is a super-solution of
\begin{equation}\label{system a=1+delta}
    \left\{\begin{aligned}\relax
    & u_t=u_{xx}+u(1-u-(1+\delta_0)v) \\ 
        & v_t=dv_{xx}+rv(1-bu-v). 
    \end{aligned}\right.
\end{equation}
It has been proved in Theorem 3 of \cite{Peng Wu Zhou} that, the spreading speed of \eqref{system a=1+delta} with the initial datum \eqref{initial datum} is  equal to 
$\hat c(1+\delta_0)$.
%$c^{*}_{LV}(1+\delta_0)$.

On the other hand, by setting $x_0>0$ large enough, we have 
$$ U_1(x-x_0)\ge u_0(x)\ \ \text{and}\ \  V_1(x-x_0)\le v_0(x)\ \text{for}\ x\in \mathbb{R}.$$
with $(u_0(x),v_0(x))$ satisfying \eqref{initial datum}. Then, by the comparison principle, we get 
$$U_1(x-c^*(1)t-x_0)\ge u(t,x)\ \ \text{and}\ \ V_1(x-c^*(1)t-x_0)\le v(t,x)$$
for $t>0$ and $x\in\mathbb{R}$,
which implies the spreading speed $\hat c(1+\delta_0)$ of \eqref{system a=1+delta} is smaller than or equal to $c^*(1)$. However, this contradicts to \eqref{order-c-hat} and completes the proof.
\end{proof}

%Now, we can conclude from Proposition \ref{prop:existence-tw} and Lemma \ref{prop: c**<hat c} that $c_+\le c^*_{LV}(1)\le c_-$. Therefore, to 
In order to complete the proof of the continuity of the minimal speed $c^*(a)$ at $a=1$, we just
need to verify $c_-=c_+$.

\begin{proposition}\label{prop: c**=c*}
Let $r,d>0$ and $b>1$ be given. %Then we have $c_-=c_+$.
It holds that %Then we have $c^*_{LV}(1)\ge c_+$.
\beaa
c_+ = c^*(1)= c_-.
\eeaa
\end{proposition}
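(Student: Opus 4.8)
By Lemma~\ref{prop: c**<hat c} we already know $c_+\le c^*(1)\le c_-$, so the entire content of the statement is the reverse inequality $c_-\le c_+$. The plan is to bound the minimal monostable speed $c^*(1-\delta')$ from above, for $a=1-\delta'$ slightly below $1$, by a quantity that converges to $c_+$. Since $c^*(a)$ is the infimum of admissible speeds, it suffices to exhibit, for the monostable system with $a=1-\delta'$, a front-like super-solution travelling at some speed $s$: this alone forces $c^*(1-\delta')\le s$. The natural candidate profile is the (unique) bistable wave at $a=1+\delta$, transported at a speed slightly larger than its own.

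Concretely, let $(U,V)=(U_{1+\delta},V_{1+\delta})$ be the bistable wave with speed $\hat c=\hat c(1+\delta)$, and set
\[ M_\delta:=\sup_{\xi\in\mathbb{R}}\frac{U(\xi)V(\xi)}{|U'(\xi)|},\qquad s:=\hat c(1+\delta)+(\delta+\delta')M_\delta. \]
I would test $(\overline u,\underline v)(t,x):=(U,V)(x-st)$ in the traveling-wave operator of the $a=1-\delta'$ system. Using the equations satisfied by $(U,V)$, the $\underline v$-inequality reduces to $(\hat c-s)V'\le 0$, which holds because $s\ge\hat c$ and $V'>0$; the $\overline u$-inequality reduces to $(s-\hat c)|U'|\ge(\delta+\delta')UV$, which holds everywhere by the choice of $s$ and the definition of $M_\delta$. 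Hence $(U,V)(x-st)$ is a super-solution front from $(1,0)$ to $(0,1)$ moving at speed $s$. Comparing it, after a suitable spatial shift, with the Cauchy solution issued from \eqref{initial datum} (here one uses that $u_0$ is compactly supported and $v_0$ is uniformly positive, so the initial ordering can be arranged), and recalling $c^{**}(1-\delta')=c^*(1-\delta')$ in the monostable regime, yields $c^*(1-\delta')\le\hat c(1+\delta)+(\delta+\delta')M_\delta$.

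I would then pass to the limit in two stages. Letting $\delta'\to 0^+$, so that $a=1-\delta'\uparrow 1$ and $c^*(1-\delta')\downarrow c_-$, gives $c_-\le\hat c(1+\delta)+\delta M_\delta$. Letting $\delta\to 0^+$ and using $\hat c(1+\delta)\to c_+$, it remains to control $\delta M_\delta$. The dominant part of $M_\delta$ comes from $\xi\to+\infty$, where by Lemma~\ref{lm: behavior around + infty} one has $U\sim e^{-\sigma_u^+\xi}$ and $V\to 1$, so $UV/|U'|\to 1/\sigma_u^+(\hat c)$; a direct computation gives $\delta/\sigma_u^+(\hat c)=\tfrac12\big(\sqrt{\hat c^2+4\delta}-\hat c\big)\to\max(-c_+,0)$, while the interior and the $\xi\to-\infty$ contributions stay $O(1)$ (the rates $\mu_u^+,\mu_v^+$ of Lemma~\ref{lem:AS-infty:b>1} remain positive and bounded as $\delta\to0$) and are killed by the factor $\delta$. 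This produces $c_-\le\max(c_+,0)$. Since $c_-\ge c^*(1)>0$ by Proposition~\ref{cor:c_lv(1)}, it follows that $c_+>0$ and $c_-\le c_+$, which together with Lemma~\ref{prop: c**<hat c} closes the chain $c_+=c^*(1)=c_-$.

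The step I expect to be the main obstacle is the uniform estimate $\limsup_{\delta\to0^+}\delta M_\delta\le\max(-c_+,0)$. This is exactly where the near-degeneracy of the wave at $a=1$ bites: the supremum defining $M_\delta$ must be shown to be asymptotically governed by the exponential tail at $+\infty$ (whose rate $\sigma_u^+(\hat c)$ is explicit but may shrink like $\delta$ or $\sqrt{\delta}$) and not by the interior. This requires a lower bound on $|U'_{1+\delta}|$ on compact sets that is uniform in $\delta$, together with uniform exponential control of both tails from Lemmas~\ref{lm: behavior around + infty}–\ref{lem:AS-infty:b>1}. Establishing these uniform-in-$\delta$ estimates, rather than the routine super-solution verification, is the heart of the argument.
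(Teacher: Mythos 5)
Your super-solution computation is correct: transporting the bistable profile $(U,V)=(U_{1+\delta},V_{1+\delta})$ at the boosted speed $s=\hat c(1+\delta)+(\delta+\delta')M_\delta$ does verify both differential inequalities for the $a=1-\delta'$ system, and the closing logic ($c_-\le\max(c_+,0)$ plus $c_-\ge c^*(1)>0$ forces $c_+>0$ and $c_-\le c_+$) is sound. However, the proof has a genuine gap, and it is exactly the step you flag at the end: the estimate $\limsup_{\delta\to0^+}\delta M_\delta\le\max(-c_+,0)$ is not proven, and proving it is not a routine refinement but essentially the whole difficulty of the proposition. The quantity $M_\delta$ is a supremum over all of $\mathbb{R}$, and to show it is asymptotically governed by the right tail you need (i) a positive lower bound on $|U'_{1+\delta}|$ over the transition region that is uniform in $\delta$, and (ii) quantitative, uniform-in-$\delta$ versions of Lemmas~\ref{lm: behavior around + infty} and \ref{lem:AS-infty:b>1} (the constants $l_i$, and the point beyond which the asymptotics take over, all depend on the individual wave, hence on $\delta$). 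Neither is available: as $a\to1^+$ the wave family may a priori degenerate (e.g.\ $U_{1+\delta}\to0$ locally uniformly with $V$ converging to a reversed KPP front, or the front splitting into separating layers, in which case the interior contribution to $M_\delta$ blows up), and the only compactness result in the paper, Proposition~\ref{prop:existence-tw}, treats the limit $a\to1^-$ and crucially uses nonnegativity of the approximating speeds — information one does not have for $c_+$, whose sign is unknown at this stage. Filling (i)--(ii) amounts to a full singular-limit analysis of the bistable waves as $a\to1^+$, i.e.\ to redoing the hard part of the proposition.

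The paper's proof is structured precisely so that no such uniform-in-$\delta$ control is ever needed, and this is the idea your proposal is missing. It argues by contradiction: assuming $c_->c_+$ creates a \emph{fixed} speed surplus, and one then works with a \emph{single, fixed} wave $(U_0,V_0)$ at $a=1+\delta_*$, speed $c_1<c_+$, transported at the fixed intermediate speed $c_2=c_++(c_--c_+)/2$. The price is that the bare profile no longer suffices; the paper corrects it by the piecewise functions $(R_u,R_v)$ so that each region is handled by a different mechanism: near $+\infty$ the corrections decay strictly faster than the wave ($\lambda_1>\sigma_u^+$), so only the tail rate of the one fixed wave enters; on the compact middle region the fixed surplus $(c_2-c_1)\min|U_0'|>0$ absorbs all perturbations once $\delta_0,\delta_*,\varepsilon_1,\eta_1$ are small; and near $-\infty$ the profile is pushed onto the constant state $(1,0)$ (the role of $\lambda_3<\min\{\mu_u^+,\mu_v^+\}$), so no knowledge of the constants $l_5,\dots,l_8$ is required at all. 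In other words, the paper converts the gap $c_--c_+$ into a fixed resource, whereas your argument tries to make the speed correction $\delta M_\delta$ vanish, which is where all the (unavailable) uniform estimates are hidden. As written, your proposal is an elegant reduction, but it does not constitute a proof; to complete it along your lines you would have to establish the uniform tail and interior bounds of (i)--(ii), which I do not see how to do short of reproducing an analysis at least as involved as the paper's construction.
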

\begin{proof}
From Lemma~\ref{prop: c**<hat c}, we have $c_-\ge  c_+$. To prove $c_-=c_+$, we %first 
assume by contradiction that 
\bea\label{c->c+}
c_->c_+.
\eea
To emphasize the dependence on both $a,b>1$, we denote $\hat c:=\hat c(a,b)$. Since $\hat c(a,b)$ is strictly decreasing on $a\in(1,\infty)$ and strictly increasing on $b\in(1,\infty)$, we can choose very small $\delta_*>0$ such that 
\bea\label{c1 and c+}
c_1:=\hat c(1+\delta_*,b)< c_+.
\eea
Then we choose
\begin{equation}\label{c2}
c_2=c_++\frac{(c_--c_+)}{2}\in (c_+,c_-)
\end{equation}
such that $c_1<c_2< c_-$.

Next, we show that there exists a super-solution $(\overline U,\underline V)(\xi)$ satisfying 
\begin{equation}\label{def: super solution 1}
\left\{
\begin{aligned}
&N_1[\overline U,\underline V]=\overline U''+c_2\overline U'+\overline U(1-\overline U-(1-\delta_0)\underline V)\le 0,\\
&N_2[\overline U,\underline V]=d\underline V''+c_2\underline V'+r\underline V(1-\underline V-b\overline U)\ge 0,
\end{aligned}
\right.
\end{equation}
for some small $\delta_0>0$.

%From the asymptotic behavior provided in \cite{Kan-On}, there exist $\sigma_u^{+}$ and $\sigma_v^+$ such that
%and $\sigma_v^{\pm}$  with $\sigma_u^+\ge \sigma_v^+$ and $\sigma_v^-\ge \sigma_u^-$, such that 
%\bea\label{U1 V1 +infty exponential}
%U_0(\xi)\sim e^{-\sigma_u^+\xi}\quad\text{and}\quad 1-V_0(\xi)\sim e^{-\sigma_v^+\xi}\quad\text{as}\ \xi\to+\infty.
%&V_0(\xi)&\sim e^{\sigma_v^-\xi}\quad\text{and}\quad 1-U_0(\xi)\sim e^{\sigma_u^-\xi}\quad\text{as}\ \xi\to-\infty.%\label{U1 V1 -infty exponential}
%\eea
%More precisely, we have 
%\begin{equation}\label{delta 3}
%\sigma_u^+=\frac{c_1+\sqrt{c_1^2+4\delta_*}}{2}\ \ \text{and}\ \ \sigma_u^+\ge \sigma_v^+.
%\end{equation}

\begin{figure}
\begin{center}
\begin{tikzpicture}[scale = 1.1]
\draw[thick](-6,0) -- (6,0) node[right] {$\xi$};
\draw [ultra thick] (-6,-0.7)to [out=0,in=140] (-3,-1.5) -- (-1.7,-1.5)  to [out=30,in=270]   (-1,0)  to [out=90,in=185]  (1,3) to [out=45,in=190] (1.5,3.3) to [out=0,in=160] (6,0.5);
\draw [semithick] (-6, -0.1) to [ out=0, in=150] (-4,-0.5) to [out=30, in=230] (-3.5,0)  to [out= 40, in=180] (-3,0.2) to [out=15, in=220] (1,1.6) to [out=70,in=180] (2,2)to [out=0,in=170] (6,0.2);
\node[below] at (1,0) {$\xi_{1}+\delta_1$};
%\node[below] at (0,-0.1) {$\xi_{\sigma}$};
\draw[dashed] [thick] (-3.5,0)-- (-3.5,-0.5);
\node[below] at (-3.5,-0.4) {$\xi_2$};
\draw[dashed] [thick] (-3,0.2)-- (-3,-0.5);
\node[below] at (-2.7,-0.4) {$\xi_2+\delta_5$};
\draw[dashed] [thick] (-4,0)-- (-4,1);
\node[above] at (-4,0.8) {$\xi_2-\delta_7$};
\draw[dashed] [thick] (1,0)-- (1,1.6);
\draw[dashed] [thick] (1,2)-- (1,3);
\draw[dashed] [thick] (-1,0)-- (-1,-1.1);
\draw[dashed] [thick] (-1.7,-0.9)-- (-1.7,1.7);
\node[above] at (-1.7,1.55) {$\xi_1-\delta_4$};
%\node[below] at (-2.5,-0.2) {$\xi_2$};
%\draw[dashed] [thick] (-2.5,-0.3)-- (-2.5,0.3);
%\draw[dashed] [thick] (-2.5,-0.7)-- (-2.5,-1);
\node[below] at (-0.7,-1) {$\xi_1-\delta_3$};
\draw [thin] (-3.85,-0.41) arc [radius=0.2, start angle=40, end angle= 140];
\node[above] at (-4,-0.43) {$\alpha_6$};
\draw [thin] (-2.8,-1.5) arc [radius=0.2, start angle=0, end angle= 135];
\node[above] at (-2.8,-1.4) {$\alpha_5$};
\draw [thin] (-2.8,0.255) arc [radius=0.2, start angle=30, end angle= 175];
\node[above] at (-3,0.3) {$\alpha_4$};
\draw [thin] (-1.5,-1.35) arc [radius=0.2, start angle=20, end angle= 200];
\node[above] at (-1.7,-1.3) {$\alpha_3$};
\draw [thin] (1.1,1.8) arc [radius=0.2, start angle=70, end angle= 220];
\node[above] at (0.7,1.7) {$\alpha_1$};
\draw [thin] (1.1,3.13) arc [radius=0.2, start angle=50, end angle= 180];
\node[above] at (1,3.13) {$\alpha_2$};
%\node[above] at (2,1.5) {\Large{$v$}};
\node[above] at (2,2.3) {\Large{$R_u$}};
\node[above] at (2,1) {\Large{$R_v$}};
%\node[below] at (1,-2.3) {Figure~\ref{Figure:b>1}: $(R_u,R_v)$ for the case $b^*\ge 1$.};
\end{tikzpicture}
\caption{auxiliary functions $(R_u,R_v)$.}\label{Figure:b>1}
\end{center}
\end{figure}
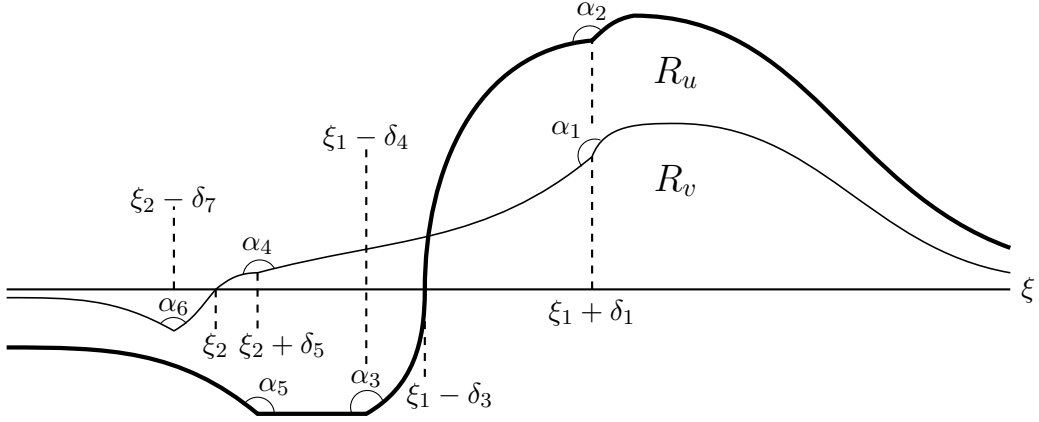

\bigskip
\noindent
{\bf{Construction of the super-solution}}
Define $(U_0,V_0)$ as the unique traveling wave solution of \eqref{tw solution} with $a=1+\delta_*$ and $c=c_1$. 
Let $(\overline U,\underline V)(\xi)=(U_0-R_u,V_0+R_v)(\xi)$ where  the auxiliary functions $(R_u,R_v)(\xi)$ (see Figure \ref{Figure:b>1}) are defined as  
\begin{equation*}
(R_u,R_v)(\xi):=\begin{cases}
(\varepsilon_1(\xi-\xi_1) e^{-\lambda_1\xi},\eta_1 (\xi-\xi_1)e^{-\lambda_1\xi}),&\ \ \mbox{for}\ \ \xi_1+\delta_1\le\xi,\\
(\varepsilon_2\sin(\delta_2(\xi-\xi_1+\delta_3)),\eta_2e^{\lambda_{1}\xi}),&\ \ \mbox{for}\ \ \xi_1-\delta_4\le\xi\le\xi_{1}+\delta_1,\\
(-\varepsilon_3,\eta_2e^{\lambda_{2}\xi}),&\ \ \mbox{for}\ \ \xi_2+\delta_5\le\xi\le\xi_1-\delta_4,\\
(-\varepsilon_4e^{\lambda_3\xi},\eta_3\sin(\delta_6(\xi-\xi_2))),&\ \ \mbox{for}\ \ \xi_2-\delta_7\le\xi\le\xi_2+\delta_5,\\
(-\varepsilon_4e^{\lambda_3\xi},-\eta_4e^{\lambda_3\xi}),&\ \ \mbox{for}\ \ \xi\le\xi_2-\delta_7,
\end{cases}
\end{equation*}
where
$\lambda_1>\sigma_u^+$ ($\sigma_u^+=\sigma_u^+(c_1,1+\delta_*$) is defined in Lemma \ref{lm: behavior around + infty}), 
sufficiently large $\lambda_{2}>0$, and sufficient small
\begin{equation}\label{condition on lambda 2}
0<\lambda_3<\min\{\mu_u^+,\mu_v^+\}\quad (\mu_u^+=\mu_u^+(c_1),\mu_v^+=\mu_v^+(c_1)\ \ \text{are defined in Lemma \ref{lem:AS-infty:b>1}}). 
\end{equation}

Since
$(U_0,V_0)(-\infty)=(1,0)$ and
$(U_0,V_0)(+\infty)=(0,1)$, for any given small $\rho>0$, we can take $M_0>0$ sufficiently large such that
\bea\label{rho}
\begin{cases}
0< U_0(\xi)<\rho,\quad 1-\rho<V_0(\xi)<1&\quad\mbox{for all}\quad\xi\geq M_0,\\
0< V_0(\xi)<\rho,\quad 1-\rho<U_0(\xi)<1&\quad\mbox{for all}\quad\xi\leq -M_0.
\end{cases}
\eea
We set $\xi_1>M_0$ and $\xi_2<-M_0$ with $M_0$ defined as \eqref{rho}, and set $\varepsilon_{i=1,\cdots,4}>0$ and $\eta_{j=1,\cdots,4}>0$ to make $(R_u,R_v)(\xi)$ continuous on $\mathbb{R}$. $\delta_{k=1,\cdots,7}>0$ will be determined later which make $(\overline U, \underline V)$ satisfy \eqref{def: super solution 1}.

\medskip

Next, we divide the proof into several steps.
\medskip

\noindent{\bf{Step 1}:} We consider $\xi\in[\xi_{1}+\delta_1,\infty)$ with $\xi_1>M_0$  and  some small $\delta_1$ satisfying
\bea\label{cond delta 1 pushed}
0<\delta_1<\frac{1}{\lambda_1+\lambda_2}.
\eea
In Step 1, we aim to verify that $(\overline U,\underline V)(\xi)=(U_0-R_u,V_0+R_v)(\xi)$ with
\beaa
(R_u,R_v)(\xi)=(\varepsilon_1(\xi-\xi_1)e^{-\lambda_1\xi},\eta_1(\xi-\xi_{1}) e^{-\lambda_1\xi}),
\eeaa
satisfies \eqref{def: super solution 1} by setting $\eta_1\ll \varepsilon_1\ll 1$.
%and $\alpha_1,\delta_0$ sufficiently small.
With \eqref{cond delta 1 pushed}, it is easy to check $R'_u((\xi_1+\delta_1)^+)>0$  and $R'_v((\xi_1+\delta_1)^+)>0$.

%\bea\label{cond delta 0 system case 1}
%0<\delta_0<\frac{\varepsilon_1rb^*-2rC_1\lambda_u\eta_1-2|1-d|\lambda^2_u\eta_1}{rA_0\lambda_u},
%\eea
%where $C_1$ is defined in \eqref{C1}.

By some  straightforward computations,  we have
\begin{equation*}
\begin{aligned}
N_1[\overline U,\underline V]=&-R_u(\lambda_1^2-\lambda_1c_2+1-2U_0+R_u-(1-\delta_0)(V+R_v))\\
&+(c_2-c_1)U'_0+(\delta_0+\delta_*)U_0V_0+(2\lambda_1-c_2)\varepsilon_1e^{-\lambda_1\xi}-(1-\delta_0)U_0R_v.
\end{aligned}
\end{equation*}
From Lemma \ref{lm: behavior around + infty}, we have $-U_0'/U_0\sim \sigma_u^+$ 
as $\xi\to\infty$. 
Note that 
$\sigma_u^+\to c_+>c_1$ as $\delta_*\to 0$ due to \eqref{c1 and c+} and Lemma \ref{lm: behavior around + infty}. Then, by setting $\delta_0,\delta_*>0$ small enough such that
$$\delta_0+\delta_*<\Big(\frac{c_{-} -c_+}{4}\Big)c_1,$$
% we have 
from \eqref{c2} there exists $C_1>0$ such that
\beaa
(c_2-c_1)U'_0(\xi)+(\delta_0+\delta_*)U_0(\xi)V_0(\xi)<-C_1U'_0(\xi), \quad \xi \ge \xi_1+\delta_1.
\eeaa
Here we also used the fact that $c_2-c_1>(c_--c_+)/2$. 
Moreover, since $\lambda_1>\sigma_u^+$, we have 
\beaa
e^{-\lambda_1\xi}=o(U'_0(\xi)),\quad R_v(\xi)=o(U'_0(\xi)),\quad {\rm and}\quad R_u(\xi)=o(U'_0(\xi))\quad \mbox{as}\quad \xi\to\infty.
\eeaa
Then, up to enlarging $M_0$ if necessary, we obtain that $N_1[\overline U,\underline V]\leq0$
for all $\xi\in[\xi_1+\delta_1,\infty)$.

Next, we deal with the inequality of $N_2[\overline U, \underline V]$. By some  straightforward computations,  we have
\begin{equation*}
\begin{aligned}
N_2[\overline U, \underline V] = & (c_2-c_1)V'_0+(c_2-2d\lambda_1)\eta_1e^{-\lambda_1\xi}+rbV_0R_u\\
&+ rR_v\Big[\frac{d\lambda_1^2-c_2\lambda_1}{r}+1-2V_0-R_v-b(U_0-R_u)\Big].
\end{aligned}
\end{equation*}
 %By 
From $\lambda_1>\sigma_u^+$ and Lemma \ref{lm: behavior around + infty}, we have
 \beaa
 e^{-\lambda_1\xi}=o(V'_0(\xi))\quad {\rm and}\quad  R_v(\xi)=o(V'_0(\xi))\quad \mbox{as}\quad \xi\to\infty.
 \eeaa
 Then, up to enlarging $M_0$ if necessary, we obtain that $N_2[\overline U,\underline V]\geq0$
for all $\xi\in[\xi_1+\delta_1,\infty)$.

\medskip

\noindent{\bf{Step 2:}} We consider $\xi\in[\xi_1-\delta_4,\xi_1+\delta_1]$ with $\delta_1$ satisfying \eqref{cond delta 1 pushed} and $\delta_4>0$ will be determined later. In this case, we have
$$(R_u,R_v)(\xi):=(\varepsilon_2\sin(\delta_2(\xi-\xi_1+\delta_3)),\eta_2e^{\lambda_{2}\xi})$$
with $0<\delta_2<\lambda_2(\delta_1+\delta_3)$ sufficiently small and $\delta_1+\delta_3<\pi/2\delta_2$.
%\bea\label{cond delta 4 pushed}
%0<\delta_3<\delta_4\quad\text{satisfying}\quad|\delta_3-\delta_4|\ll 1,
%\eea
%and
%\bea\label{cond delta 3 4 pushed}
%\delta_1+\delta_3<\frac{\pi}{2\delta_2}.
%\eea

We first verify the following claim: %as follows:
\begin{claim}\label{cl: alpha 1 aplha 2 pushed}
There exist %$\xi_2<\xi_1$, $\delta_1>0$ and
$\varepsilon_2>0$ and $\eta_2>0$ sufficiently small
such that
\beaa
&&R_u((\xi_{1}+\delta_1)^+)=R_u((\xi_{1}+\delta_1)^-),\quad R_v((\xi_{1}+\delta_1)^+)=R_v((\xi_{1}+\delta_1)^-),\\ 
&&R'_u((\xi_{1}+\delta_1)^+)>R'_u((\xi_{1}+\delta_1)^-), \quad\text{and}\quad R'_v((\xi_{1}+\delta_1)^+)>R'_v((\xi_{1}+\delta_1)^-),
\eeaa
provided that  $\delta_1$ satisfy \eqref{cond delta 1 pushed},  and $\delta_2$ is sufficiently small.
Consequently, we have $\angle\alpha_{1,2}<180^{\circ}$.
\end{claim}
\begin{proof}
By some  straightforward computations, we have
\beaa
&&R_u((\xi_{1}+\delta_1)^+)=\varepsilon_1\delta_1e^{-\lambda_1(\xi_{1}+\delta_1)},\quad R_u((\xi_{1}+\delta_1)^-)=\varepsilon_2\sin(\delta_2(\delta_1+\delta_3)),\\
&&R'_u((\xi_{1}+\delta_1)^+)=\varepsilon_1e^{-\lambda_1(\xi_{1}+\delta_1)}-\lambda_1R_u(\xi_{1}+\delta_1),\\
&&R'_u((\xi_{1}+\delta_1)^-)=\varepsilon_2\delta_2\cos(\delta_2(\delta_1+\delta_3)).
\eeaa
We first choose %sufficient small and
$\varepsilon_2=\varepsilon_1\delta_1e^{-\lambda_1(\xi_{1}+\delta_1)}/\sin(\delta_2(\delta_1+\delta_3))$ such that
\beaa
R_u((\xi_{1}+\delta_1)^+)=R_u((\xi_{1}+\delta_1)^-).
\eeaa
Then, by applying  the fact $\frac{x\cos x}{\sin x}\to 1$ as $x\to 0$, we have 
$$R'_u((\xi_{1}+\delta_1)^+)-R'_u((\xi_{1}+\delta_1)^-)>0$$
 is equivalent to
\beaa
\delta_1(\lambda_1+\frac{\delta_2}{\delta_1+\delta_3})<1,
\eeaa
which holds since \eqref{cond delta 1 pushed} and $\delta_2<\lambda_2(\delta_1+\delta_3)$ very small.

On the other hand, by some  straightforward computations, we have
\beaa
&&R_v((\xi_{1}+\delta_1)^-)=\eta_2e^{\lambda_2(\xi_{1}+\delta_1)},\ R_v((\xi_{1}+\delta_1)^+)=\eta_1\delta_1e^{-\lambda_1(\xi_{1}+\delta_1)},\\
&&R'_v((\xi_{1}+\delta_1)^-)=\lambda_2\eta_2e^{\lambda_2(\xi_{1}+\delta_1)},\ R'_v((\xi_{1}+\delta_1)^+)=\eta_1(1-\delta_1\lambda_1)e^{-\lambda_1(\xi_{1}+\delta_1)},
\eeaa
where $\lambda_2$ is large.
We take
\beaa
\eta_2=\eta_1\delta_1e^{-(\lambda_1+\lambda_2)(\xi_{1}+\delta_1)}>0,
\eeaa
which implies $R_v((\xi_{1}+\delta_1)^-)=R_v((\xi_{1}+\delta_1)^+)$. Then
\beaa
R_v'((\xi_{1}+\delta_1)^+)-R_v'((\xi_{1}+\delta_1)^-)=\eta_1e^{-\lambda_1(\xi_1+\delta_1)}(1-\delta_1\lambda_1-\delta_1\lambda_2)>0,
\eeaa
is equivalent to \eqref{cond delta 1 pushed}.
The proof of Claim \ref{cl: alpha 1 aplha 2 pushed} is complete.
\end{proof}

To finish Step 2, it suffices to take a small $\delta_2>0$ and suitable $0<\delta_3<\delta_4$ such that
\bea\label{goal: step 3}
N_1[\overline U,\underline V]\leq 0\quad\text{and}\quad N_2[\overline U,\underline V]\geq 0\quad \mbox{for}\quad \xi\in[\xi_1-\delta_4,\xi_1+\delta_1].
\eea
By some  straightforward computations, for $\xi\in[\xi_2-\delta_4,\xi_1+\delta_1]$ we have
\beaa
N_1[\overline U,\underline V]&=&-R''_u-c_2R'_u-(c_1-c_2)U_0'+(\delta_0+\delta_*)U_0V_0\\
&&-R_u(1-2U_0+R_u-(1-\delta_0)(V_0+R_v))-(1-\delta_0)U_0R_v,\\
N_2[\overline U,\underline V]&=&dR''_v+c_2R'_v+rR_v[1-2V_0-R_v-b(U_0-R_u)]
\\&&+rbV_0R_u+(c_2-c_1)V_0'.
\eeaa

%\beaa
%N_1[\overline U,\underline V]&=&-2\sqrt{1-a}\delta_2\varepsilon_2\cos(\delta_2(\xi-\xi_1+\delta_3))-a(U_0-R_u)R_v+(c_2-c_1)U_0'\\
%&&-R_u(1-\delta_2^2-2U_0+R_u-(1-\delta_0)(V_0+R_v))+(\delta_0+\delta_*)U_0V_0-(1-\delta_0)U_0R_v,\\
%N_2[\overline U,\underline V]&=&R_v\Big[d\lambda_2^2+2\sqrt{1-a}\lambda_2+r[1-2V_0-R_v-(b+\delta_0)(U_0-R_u)]\Big]
%\\&&+rV_0[(b+\delta_0)R_u-\delta_0U_0]+(c_2-c_1)V_0'.
%\eeaa

Notice that, $c_2-c_1>(c_--c_+)/2$ which is independent on $\delta_0,\delta_*$ and $|R_u|,|R_v|$. Therefore
$(c_1-c_2)U_0'$ and $(c_2-c_1)V_0'$ have a lower bound independent on $\delta_0,\delta_*$ and $|R_u|,|R_v|$ for $\xi\in[\xi_2-\delta_4,\xi_1+\delta_1]$.
Since $R_u$ and $R_v$ are set to be continuous for all $\xi\in\mathbb{R}$, 
$$|R_u|,|R'_u|, |R''_u|\to 0\quad \text{and}\quad |R_v|,|R'_v|, |R''_v|\to 0\quad  \text{as} \quad\varepsilon_1,\eta_1\to 0.$$
As a result, we obtain $N_1[\overline U,\underline V]\le 0$ and $N_2[\overline U,\underline V]\ge 0$ up to decreasing $\delta_0$, $\delta_*$, $\varepsilon_1$, and $\eta_1$ if necessary.
This completes the proof of \eqref{goal: step 3}, and Step 2 is finished.

\medskip
    
\noindent{\bf{Step 3:}} We consider $\xi\in[\xi_2+\delta_5,\xi_1-\delta_4]$ for some $\delta_5>0$ such that $\xi_2+\delta_5<-M_0$. We set $\delta_5$ to satisfy
\bea\label{cond delta 5}
\delta_5>\frac{1}{\lambda_2}.
\eea
In this case, we have
\beaa
(R_u,R_v)(\xi)=(-\varepsilon_3,\eta_2e^{\lambda_2\xi}).
\eeaa

First, we choose $\varepsilon_3=R_u(\xi_1-\delta_4)$ such that $R_u(\xi)$ is continuous at $\xi=\xi_1-\delta_4$.
Clearly, by setting $|\delta_3-\delta_4|$ very small as in Step 2, we have 
$$R'_u((\xi_1-\delta_{4})^+)>0=R'_u((\xi_1-\delta_{4})^-),\  {\it i.e.},\  \angle\alpha_3<180^{\circ}.$$

Similar to Step 2,  $N_1[\overline U,\underline V]\le 0$ and $N_2[\overline U,\underline V]\ge 0$ hold up to decreasing $\delta_0$, $\delta_*$, $\varepsilon_1$, and $\eta_1$ if necessary.

\medskip

\noindent{\bf{Step 4:}}
We consider $\xi\in[\xi_2-\delta_7,\xi_2+\delta_5]$.
In this case, we have
\beaa
(R_u,R_v)(\xi)=\Big(-\varepsilon_4e^{\lambda_3\xi},\eta_3\sin(\delta_6(\xi-\xi_2))\Big),
\eeaa
where  $\varepsilon_4>0$, $\eta_3>0$,
$\delta_6>0$, and $\delta_7>0$ are determined below.

%In view of Lemma~\ref{lem:AS-infty:b=1} and Lemma~\ref{lem:AS-infty:b>1}, we see that $1-U$ decays to zero either exponentially or algebraically as $\xi\to-\infty$. Therefore,
%without loss of generality, we may assume that
%\beaa
%(-\xi)^{\theta}[1-U_*(\xi)]<\varepsilon_3\quad \mbox{for all $\xi\leq -M_0$.}
%\eeaa

%Note that, $R_u(\xi)\le 0$ on $(-\infty,\xi_2+\delta_5]$ since \blue{$\xi_2+\delta_5\le -M_0$}.

We first choose
\beaa%\label{epsilon4}
\varepsilon_4=\frac{\varepsilon_3}{e^{\lambda_3(\xi_2+\delta_5)}}
\eeaa
such that $R_u(\xi)$ is continuous at $\xi=\xi_2+\delta_5$,
where $\varepsilon_3$ and $\delta_5$ are fixed in Step 3.
Then, 
we have
\beaa
R'_u((\xi_2+\delta_5)^+)=0>R'_u((\xi_2+\delta_5)^-),
\eeaa
and thus $\angle\alpha_5<180^{\circ}$.
Next, we verify the continuity of $R_v$ and the right angle of $\alpha_4$.
\begin{claim}\label{cl 5}
For any $\delta_5$ satisfying \eqref{cond delta 5}, there exist
% $\gamma_6=\frac{\eta_5}{\varepsilon_5}$, such that $R_v(\xi_3+)=R_v(\xi_3-)$ and $\angle\alpha_4<180^{\circ}$.
$\eta_3>0$ and small $\delta_6>0$
such that $R_v(\xi)$ is continuous at $\xi=\xi_2+\delta_5$ and $\angle\alpha_4<180^{\circ}$.
\end{claim}
\begin{proof}
First, we  take
\bea\label{eta4}
\eta_3= \frac{\eta_2e^{\lambda_2(\xi_2+\delta_5)}}{\sin(\delta_5\delta_6)}>0
\eea
such that $R_v((\xi_2+\delta_5)^+)=R_v((\xi_2+\delta_5)^-)$.

By some straightforward computations, we have
$$R'_v((\xi_2+\delta_5)^+)=\lambda_2\eta_2e^{\lambda_2(\xi_2+\delta_5)}.$$ 
Then from \eqref{eta4},
\beaa
R'_v((\xi_2+\delta_5)^-)=\eta_3\delta_6\cos(\delta_5\delta_6)=\eta_2e^{\lambda_2(\xi_2+\delta_5)}\frac{\delta_6\cos(\delta_5\delta_6)}{\sin(\delta_5\delta_6)},
\eeaa
% We first let $R_v(\xi_3+)=R_v(\xi_3-)$. Then, since $\sin(\delta_3(\xi_3-\xi_4))=\delta_3(\xi_3-\xi_4)\cos(\delta_3(\xi_3-\xi_4))$ as $\delta_3\to 0$, $R'_v(\xi_3+)>R'_v(\xi_3-)$ is equivalent to $(\xi_3-\xi_4)>\frac{1}{\mu_1}$.
which yields that
$$R'_v((\xi_2+\delta_5)^-)\rightarrow \eta_2e^{\lambda_2(\xi_2+\delta_5)}/\delta_5\quad\text{as}\quad \delta_6\to0.$$
Thus, $R'_v((\xi_2+\delta_5)^+)>R'_v((\xi_2+\delta_5)^-)$ is equivalent to $\delta_5>\frac{1}{\lambda_2}$ by setting $\delta_6$ sufficiently small.
This completes the proof of Claim~\ref{cl 5}.
\end{proof}

Similar to Step 2,  $N_1[\overline U,\underline V]\le 0$ and $N_2[\overline U,\underline V]\ge 0$ hold up to decreasing $\delta_0$, $\delta_*$, $\varepsilon_1$, and $\eta_1$ if necessary.

\medskip

\noindent{\bf{Step 5:}} We consider $\xi\in(-\infty,\xi_2-\delta_7]$.
In this case, we have
\beaa
(R_u,R_v)(\xi)=\Big(-\varepsilon_4e^{\lambda_3\xi},-\eta_4e^{\lambda_3\xi}\Big).
\eeaa
Let us take $$\eta_4=\frac{\eta_3\sin(\delta_6\delta_7)}{e^{\lambda_3(\xi_2-\delta_7)}}$$
 such that
$R_v(\xi)$ is continuous at $\xi=\xi_2-\delta_7$.
Also, since $0<\delta_7\le \delta_5$ and $R_v(\xi)$ is decreasing on $\xi$ for $\xi\le\xi_2-\delta_7$, we have
$$R'_v((\xi_2-\delta_7)^+)>0>R'_v((\xi_2-\delta_7)^-),$$ %follows from Lemma \ref{lm: behavior around - infty b=1},
and hence $\angle\alpha_6<180^{\circ}$.

%We choose $0<\mu_2=\mu_3<\min\{\mu_u,\mu_v\}$.
%$\angle\alpha_5<180^{\circ}$ is trivial.
Finally, we verify the differentiable inequalities.
Due \eqref{condition on lambda 2} and Lemma \ref{lem:AS-infty:b>1}, there exists $M_1>M_0$ sufficiently large such that $\overline U=1$ and $\underline V=0$ for all $\xi\in(-\infty,-M_1]$.
%More precisely, Claim \ref{cl 6} implies $\eta_4=\varepsilon_4$. Then, from the definition of $(R_u,R_v)$, we may define $M_1$ satisfying $1-\eta_4(M_1)^{\theta}= 0$. Thus
%$W_u(\xi)=1$, $W_v(\xi)=0$ for all $\xi\in(-\infty,-M_1]$, which implies that
%Therefore, it is enough to let $W_u=1$ and $W_v=0$, and hence
%$$N_3[W_u,W_v]\le 0\ \ \text{and}\ \ N_4[W_u,W_v]\ge 0\ \ \text{for}\ \ \xi\in(-\infty,-M_1].$$
As a result, it suffices to verify the inequalities for $\xi\in[-M_1,\xi_2-\delta_7]$.
By some  straightforward computations and \eqref{rho},  we have
\beaa
N_1[\overline U,\underline V]&=&-R''_u-c_2R'_u-(c_1-c_2)U_0'+(\delta_0+\delta_*)U_0V_0\\
&&-R_u(1-2U_0+R_u-(1-\delta_0)(V_0+R_v))-(1-\delta_0)U_0R_v\\
&\le & -(\lambda_3^2+c_2\lambda_3-1+4\rho+\eta_4/\varepsilon_4)R_u-(c_1-c_2)U_0'+(\delta_0+\delta_*)V_0.
\eeaa

Notice that, $\eta_4/\varepsilon_4\to 0$ as $\delta_7\to 0$. Then, up to decreasing $\lambda_3$ and increasing $M_0$ if necessary, we have 
$$\lambda_3+c_2\lambda_3+4\rho+\eta_4/\varepsilon_4<1.$$ 
Moreover, by Lemma \ref{lem:AS-infty:b>1}, up to decreasing $\delta_0+\delta_*$ if necessary, we obtain 
$$(c_2-c_1)U_0'+(\delta_0+\delta_*)V_0\le 0.$$
Consequently, we can conclude that $N_1[\overline U,\underline V]\le 0$ for $\xi\in[-M_1,\xi_2-\delta_7]$, Provided that $\delta_0$, $\delta_7$, and $\delta_*$ are sufficient small.

On the other hand, by some  straightforward computations,  we have
\beaa
N_2[\overline U,\underline V]&=&R''_v+c_2R'_v+rR_v[1-2V_0-R_v-b(U_0-R_u)]
\\&&+rbV_0R_u+(c_2-c_1)V_0'\\
&\ge & [d\lambda_3^2+c_2\lambda_3-(b-1)r+r(b+1)\rho]R_v+rbV_0R_u+(c_2-c_1)V'_0.
\eeaa
Then, up to decreasing $\lambda_3$ and increasing $M_0$ if necessary, we have 
$$d\lambda_3^2+c_2\lambda_3-(b-1)r+r(b+1)\rho<0.$$ 
Moreover, by Lemma \ref{lem:AS-infty:b>1}, up to increasing $M_0$ if necessary, we obtain 
 $$rbV_0R_u+(c_2-c_1)V'_0\ge 0.$$
Consequently, we can conclude that $N_2[\overline U,\underline V]\ge 0$ for $\xi\in[-M_1,\xi_2-\delta_7]$. The verification for Step 5 is complete.

\medskip

Now, we are ready to finish the proof of Proposition \ref{prop: c**=c*}. For any given $b>1$, the spreading speed of \eqref{system} with $a=1-\delta_0$ and initial datum satisfying \eqref{initial datum}  is equal to the minimal traveling wave speed $c^*(1-\delta_0)>c_-$. On the other hand, we define 
$$(\overline u,\underline v)(t,x):=(\overline U,\underline V)(x-c_2t-x_0)\ \text{with}\ x_0>0.$$ 
By setting $x_0>0$ large enough, we have $\overline u(0,x)\ge u_0(x)$ and $\underline v(0,x)\le v_0(x)$ where $(u_0,v_0)$ satisfies \eqref{initial datum} with $v_0\equiv 1$. Then, by applying the comparison principle, the spreading speed is smaller than or equal to $c_2$, {\it i.e.}, $c^*(1-\delta_0)\leq c_2$. This contradicts to $c_2<c_-<c^*(1-\delta_0)$. Therefore, \eqref{c->c+} is impossible and so we conclude that $c_+=c_-$. 
\end{proof}

\begin{remark}
Due to Proposition \ref{prop: minimal tw speed c>0} and the definition of $c^*(1)$, one has $c^*(1)>0$. Then, Theorem \ref{th:speed selection} follows immediately by the continuity of $c^*(a)$.
\end{remark}

\section{Proof of Theorem \ref{th:conditions} and \ref{th: continuity}}

Now, we are ready to prove Theorem \ref{th: continuity}.

\begin{proof}[Proof of Theorem \ref{th: continuity}]
 In view of Proposition~\ref{prop: c**=c*}, we have $c_-=c^*(1)=c_+$. Then, by the comparison argument, we can show that the spreading speed  $c^{**}(1)$ exists and $c^{**}(1)=c^{*}(1)$. To see this, for any given small $\epsilon>0$, we consider 
\bea\label{system a=1-e}
\left\{
\begin{aligned}
&\tilde{u}_t=\tilde{u}_{xx}+\tilde{u}(1-\tilde{u}-(1-\epsilon)\tilde{v}), & t>0,\ x\in\mathbb{R},\\
&\tilde{v}_t=d\tilde{v}_{xx}+r\tilde{v}(1-\tilde{v}-b\tilde{u}), & t>0,\ x\in \mathbb{R},
\end{aligned}
\right.
\eea
with initial datum satisfies \eqref{initial datum}. Note that, by choosing $\epsilon>0$ sufficiently small, Theorem \ref{th:speed selection} implies $c^*(1-\epsilon)$ is nonlinearly selected.
Then by using Theorem 1.1 in \cite{Wu Xiao Zhou}, we have
\bea\label{long-time-mono}
\sup_{x\geq0}|(\tilde{u},\tilde{v})(t,x)-(\tilde{U},\tilde{V})(x- c^{*}(1-\epsilon)t-x_0)|\to0
\eea
as $t\to\infty$, where $x_0$ is some constant and
$(c^{*}(1-\epsilon), \tilde{U}, \tilde{V})$ is the minimal traveling wave of 
\eqref{system a=1-e} with
\beaa
(\tilde{U}, \tilde{V})(-\infty)=(1,0),\quad (\tilde{U}, \tilde{V})(+\infty)=(0,1).
\eeaa
On the other hand,  we consider 
\bea\label{system a=1+e}
\left\{
\begin{aligned}
&\check{u}_t=\check{u}_{xx}+\check{u}(1-\check{u}-(1+\epsilon)\check{v}), & t>0,\ x\in\mathbb{R},\\
&\check{v}_t=d\check{v}_{xx}+r\check{v}(1-\check{v}-b\check{u}), & t>0,\ x\in \mathbb{R},
\end{aligned}
\right.
\eea
with initial datum satisfies \eqref{initial datum}, which forms a bistable system. Up to decreasing $\epsilon$ if necessary, $\hat c(1+\epsilon)>0$ from Proposition \ref{prop: c**=c*}.
Then by using Theorem 1 in \cite{Peng Wu Zhou}, we have
\bea\label{long-time-bis}
\sup_{x\geq0}|(\check{u},\check{v})(t,x)-(\check{U},\check{V})(x- c^{*}_{LV}(1-\epsilon)t-x_1)|\to0
\eea
$t\to\infty$, 
where $x_1$ is some constant and $( \hat c(1+\epsilon), \check{U}, \check{V})$ is the unique bistable type traveling wave of \eqref{system a=1+e} with
\beaa
(\check{U}, \check{V})(-\infty)=(1,0),\quad (\check{U}, \check{V})(+\infty)=(0,1).
\eeaa

By comparison with the solution $(u,v)$ to 
\beaa
\left\{
\begin{aligned}
&u_t=u_{xx}+u(1-u-(1-\epsilon)v), & t>0,\ x\in\mathbb{R},\\
&v_t=dv_{xx}+rv(1-v-bu), & t>0,\ x\in \mathbb{R},
\end{aligned}
\right.
\eeaa
with initial datum \eqref{initial datum}, we have 
\bea\label{CP-u}
\check{u}(t,x)\leq u(t,x) \leq \tilde{u}(t,x),\quad \check{v}(t,x)\geq v(t,x) \geq \tilde{v}(t,x)
\eea
for $t>0$ and $x\in\mathbb{R}$. Since $\epsilon>0$ is arbitrary, combining \eqref{long-time-mono}, \eqref{long-time-bis},  \eqref{CP-u} and the fact that
$c_-=c^*(1)=c_+$,
we conclude that the spreading speed exists and is equal to $c^*(1)$.
Consequently, $c^{**}(1)=c^*(1)$.
This completes the proof of Theorem \ref{th: continuity}.
\end{proof}

Last, we complete the proof of Theorem \ref{th:conditions}.

\begin{proof}[Proof of Theorem \ref{th:conditions}]
The existence of $a_2$ follows immediately from Theorem \ref{th: continuity}.
Now, we deal with the existence of $a_1$.
To get the contradiction, we assume there exist $\{a_n\}_{n\in\mathbb{N}}$ with $a_n\to\infty$ such that $\hat c(a_n)\le 0$ for all $n\in\mathbb{N}$.
Due to the monotonicity of $\hat c(a_n)$, we assume without generality that $\hat c(a_n)=0$ for all $n\in\mathbb{N}$.
We write $(U_n,V_n)$ as the solution of \eqref{tw solution} with $c=\hat c(a_n)= 0$.
By a translation, we may assume that $V_n(0)=1/2$ for all $n$.
Since $0\leq U_n,V_n\leq 1$ in $\mathbb{R}$, by standard elliptic estimates, we have
$\|V_n\|_{C^{2+\alpha}(\mathbb{R})}\leq C$ for some $C>0$ independent of $n$.

We now fix $R>0$. Then there exists $\varepsilon>0$ such that
\bea\label{U-lower-bdd}
V_n(\xi)\geq \varepsilon\quad \mbox{for all $\xi\in[-R,R]$ and $n\in\mathbb{N}$}.
\eea
Next, we define an auxiliary function
\beaa
\bar{U}_n(\xi)=\frac{e^{-\lambda_n(\xi+2R)}+e^{\lambda_n(\xi-2R)}}{1+e^{-4\lambda_nR}},\quad \xi\in[-2R,2R],
\eeaa
where
\beaa
\lambda_n:=\sqrt{a_n\varepsilon}\to\infty\quad\mbox{as}\quad n\to\infty.
\eeaa
Clearly, $\bar{U}_n(\pm 2R)=1$, $0\leq \bar{U}_n(\xi)\leq 1$ for all $\xi\in[-2R,2R]$ and $n\in\mathbb{N}$, and $\bar{U}_n\to0$ uniformly in $[-R,R]$ as $n\to\infty$.
Furthermore, by direct computation, for all large $n$ we have
\beaa
%c\bar{U}'_n+
\bar{U}''_n+\bar{U}_n(1-\bar{U}_n)-a_n\varepsilon \bar{U}_n\leq 0,\quad \xi\in[-2R,2R].
\eeaa
Together with \eqref{U-lower-bdd}, one can apply the comparison principle to conclude that $U_n\leq \bar{U}_n$ in $[-2R,2R]$ for all large $n$. In particular, we have
\bea\label{Vn-to-0}
\sup_{\xi\in[-R,R]}|U_n(\xi)|\to 0\quad \mbox{as}\quad n\to\infty.
\eea
Thanks to \eqref{Vn-to-0} and the $C^{2+\alpha}$ bound of $V_n$, up to subtract a subsequence, we may assume that
$V_n\to V_{R}$ uniformly in $[-R,R]$ as $n\to\infty$,
where $V_{R}$ is defined in $[-R,R]$ and satisfies
$V_{R}(0)=1/2$, $V'_{R}\geq0$ in $[-R,R]$ and
\beaa
%cV_R'+
dV_R''+rV_R(1-V_R)=0, \quad \xi\in[-R,R].
\eeaa
Next, by standard elliptic estimates and taking $R\to\infty$, up to subtract a subsequence, we may assume that $V_R\to V_{\infty}$ locally uniformly in $\mathbb{R}$ as $n\to\infty$, where $U_{\infty}$ satisfies
\beaa
%cU_{\infty}'+
dV_{\infty}''+rV_{\infty}(1-V_{\infty})=0, \quad \xi\in\mathbb{R},\quad V_{\infty}(0)=1/2,\quad  V'_{\infty}\geq 0.
\eeaa
It is not hard to see that $V_{\infty}(-\infty)=0$ and $V_{\infty}(+\infty)=1$. Therefore, $V_{\infty}$ forms a traveling front with speed $c=0$,
which is impossible since such solutions exist only for $c\geq 2\sqrt{dr}$ (see \cite{KPP}). This contradiction shows the existence of $a_1$. The proof of Theorem \ref{th:conditions} is complete.
\end{proof}

%%%%%%%%%%%%%%%%%%%%%%%%%%%%%%%%%%%%%%%%%%%%%%%%%%%%%%%%%%%%%%%%

\bigskip

\noindent{\bf Acknowledgement.}
The author is supported by the Japan Society for the Promotion of Science P-23314.
The author also would like to thank Professor Chang-hong Wu at National Yang Ming Chiao Tung University and Professor Quentin Griette at at Universit\'{e} Le Havre Normandie for valuable suggestions and discussions.

\end{document}